\DeclareMathOperator{\sech}{sech}
\newtheorem{theorem}{Theorem}[section]
\newtheorem{proposition}[theorem]{Proposition}
\newtheorem{lemma}[theorem]{Lemma}
\theoremstyle{definition}
\newtheorem{remark}[theorem]{Remark}
\newtheorem*{thma}{Theorem A}
\newtheorem*{thmb}{Theorem B}
\newtheorem*{thmc}{Theorem C}
\newtheorem*{thmd}{Theorem D}
\newcommand{\tr}{\operatorname{Tr}}
\newcommand{\C}{\mathbb{C}}
\newcommand{\dv}{\text{ }dV}
\newcommand{\N}{\mathbb{N}}
\newcommand{\Z}{\mathbb{Z}}
\newcommand{\R}{\mathbb{R}}
\newcommand{\nor}{\mathrm{nor}}
\newcommand{\SU}{\mathrm{SU}}
\newcommand{\U}{\mathrm{U}}
\newcommand{\CPn}{\mathbb{CP}^{n}}
\DeclareMathOperator{\codim}{codim}
\DeclareMathOperator{\ind}{index}
\DeclareMathOperator{\nullity}{nullity}
\DeclareMathOperator{\diag}{diag}
\renewcommand{\epsilon}{\varepsilon}
\numberwithin{equation}{section}
\title{Explicit harmonic self-maps of complex projective spaces}
\author{Jos\'e Miguel Balado-Alves}
\address{WWU M\"unster, Mathematisches Institut\\
Einsteinstr. 62\\
48149 M\" unster\\
Germany}
\email{jose.balado@uni-muenster.de}
\date{\today}
\subjclass[2010]{58E20; 53C43}
\keywords{harmonic maps; complex projective spaces; stability}
\thanks{Funded by the Deutsche Forschungsgemeinschaft (DFG, German Research Foundation) under Germany’s Excellence Strategy EXC 2044–390685587, Mathematics Münster: Dynamics–Geometry–Structure and the CRC 1442 Geometry: Deformations and Rigidity}
\begin{document}

\begin{abstract}
We study $ { \SU ( p + 1 ) \times \SU ( n - p ) } $-equivariant maps between complex projective spaces. For every $ { n, p \in \N } $ with $ { 0 \leq p < n } $, we construct two explicit families of uncountable many harmonic self-maps of $ \CPn $, one given by holomorphic maps and the other by maps that are neither holomorphic nor antiholomorphic. We prove that each solution is equivariantly weakly stable and explicitly compute the equivariant spectrum for some specific maps in both families.
\end{abstract}

\maketitle
\section{Introduction}\label{sec1}

Let $ \psi : ( M, g_1 ) \to ( N, g_2 ) $ be a smooth map between two closed Riemannian manifolds, the energy functional of $ \psi $ is given by
\begin{equation*}\small
    E ( \psi ) = \frac{ 1 }{ 2 } \int_M | d \psi |^2 \, \dv_{ g_1 },
\end{equation*}
where $ \dv_{ g_1 } $ denotes the Riemannian volume form of $ M $ with respect to $ g_1 $. A smooth map is called \emph{harmonic} if it is a critical point of the energy functional, i.e., satisfies the Euler-Lagrange equation
\begin{equation}\label{harmonicmapequation}\small
    \tau ( \psi ) := \tr \nabla d \psi = 0 .
\end{equation}
The section $ \tau ( \psi ) \in \Gamma ( \psi^* TN ) $ is called the \emph{tension field} of $ \psi $.

\smallskip
The harmonic map equation~\eqref{harmonicmapequation} is a second-order semilinear elliptic partial differential equation, so proving the existence of non-trivial solutions is generally challenging. This problem raised the interest of mathematicians and physicists in the last decades. Probably the most well-known result concerning existence has been proved by  Eells and Sampson in their famous paper~\cite{eells1964harmonic}, which states that if the target manifold $N$ has non-positive sectional curvature, then there exists a harmonic representative in every homotopy class. 

\smallskip
When the target manifold has positive curvature, the task seems to be much more difficult. One frequently used strategy is to impose some symmetry conditions so that equation~\eqref{harmonicmapequation} reduces to an ordinary differential equation. We refer the reader to the classical book of Eells and Ratto~\cite{eells1993harmonic} for a more thorough discussion of these methods. In this direction, P\"uttmann and Siffert~\cite{puttman2019harmonic} considered equivariant self-maps of compact cohomogeneity one manifolds whose orbit space is a closed interval, and in this setting they reduced the problem of finding harmonic representatives of the homotopy classes to solving singular boundary value problems for nonlinear second-order ordinary differential equations.

\smallskip
Apart from their existence, one of the most relevant questions concerning harmonic maps is their stability.  Roughly speaking, if a given harmonic map is stable, then there does not exist a second harmonic map “nearby”, meaning that the critical points of~\eqref{harmonicmapequation} are isolated. We refer the reader to the book~\cite{urakawa2013calculus} for an introduction to the notion of stability of harmonic maps. Recently, Branding and Siffert~\cite{branding2023equivariant} studied the equivariant stability of equivariant harmonic self-maps of compact cohomogeneity one manifolds, i.e., they consider the stability for variations that are invariant under the cohomogeneity one action.

\smallskip
In this manuscript we consider the complex projective space $ \CPn $ equipped with the Fubini-Study metric and the cohomogeneity one action of $ { \SU ( p + 1 ) \times \SU ( n - p ) } $ acting in the natural way. We then restrict ourselves to maps of the form
\begin{equation*}\small
    \psi : g \cdot \gamma ( t ) \mapsto g \cdot \gamma ( r ( t ) ),
\end{equation*}
where $ g \in { \SU ( p + 1 ) \times \SU ( n - p ) } $ and $ r : [ 0, \tfrac{ \pi }{ 2 } ] \to \R $ is a smooth function satisfying $ r ( 0 ) = 0 $ and $ r ( \tfrac{ \pi }{ 2 } ) = k \tfrac{ \pi }{ 2 } $, which are equivariant with respect to the above action. We show that in this case, the harmonicity of $ \psi $ reduces to the singular boundary value problem:
\begin{equation}\label{odeintroduction}\small
    \begin{split}
        \ddot{ r } ( t ) &+ [ ( 2 n - 2 p - 1 ) \cot t - ( 2 p + 1 ) \tan t ] \, \dot{ r } ( t ) \\[1ex]
        &+ \left[ \frac{ p }{ \cos^2 t } - \frac{ ( n - p - 1 ) }{ \sin^2 t } \right] \, \sin 2 r ( t ) - \frac{ \sin 4 r ( t ) }{ \sin^2 2 t } = 0
    \end{split}
\end{equation}
where
\begin{equation*}
    \lim_{ t \rightarrow 0 } r ( t ) = 0 \quad \text{and} \quad \lim_{ t \rightarrow \frac{ \pi }{ 2 } } r ( t ) = k \tfrac{ \pi }{ 2 }
\end{equation*}
and $ k \in 2 \Z + 1$.

\smallskip
The main results we present here are the following.
\begin{thma}\label{thma}
    Let $ n \in \N $ be given and fix $ p \in \N $ such that $ 0 \leq p < n $. There exists a family of harmonic $ { \SU ( p + 1 ) \times \SU ( n - p ) } $-equivariant maps $ \psi_{ \rho } $ for $ \rho \in \R \setminus \{ 0 \} $ such that for every $ \rho > 0 $ the map $ \psi_{ \rho } $ is holomorphic and for every $ \rho < 0 $ the map $ \psi_{ \rho } $ is neither holomorphic nor anti-holomorphic. Moreover, the energy of each of these maps is given by
    \begin{equation*}\small
        E ( \psi_{ \rho } ) = \tfrac{ \pi^n } { ( n - 1 ) ! } .
    \end{equation*}
\end{thma}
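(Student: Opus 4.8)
The plan is to produce both families as explicit solutions $r_\rho$ of the boundary value problem~\eqref{odeintroduction} and then to translate each one into a map of $\CPn$ whose type can be recognised directly. The backbone of the construction is a first-order reduction: the equation $\dot r = \sin 2r/\sin 2t$ already solves~\eqref{odeintroduction}. This is checked by direct substitution (one gets $\ddot r = \sin 4r/\sin^2 2t - 2\sin 2r\cos 2t/\sin^2 2t$, and after plugging in, the $\sin 2r$-terms cancel by an elementary trigonometric identity in $t$), and it integrates to $\tan r = \rho\tan t$. Furthermore, setting $W := \dot r - \sin 2r/\sin 2t$, the full equation~\eqref{odeintroduction} is equivalent to the \emph{linear} first-order ODE
\[
  \dot W + \left[\,\frac{2\cos 2r}{\sin 2t} + (2n-2p-1)\cot t - (2p+1)\tan t\,\right] W = 0
\]
holding along every solution; this is most transparent after the substitution $x = \log\tan t$, which identifies $(0,\tfrac{\pi}{2})$ with $\R$ and puts~\eqref{odeintroduction} in the form $r_{xx} + (\mu - \nu\tanh x)(r_x - \tfrac12\sin 2r) - \tfrac14\sin 4r = 0$ with $\mu = n-2p-1$, $\nu = n-1$. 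The case split $W\equiv 0$ versus $W\not\equiv 0$ is what will separate the holomorphic family from the other one.

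For the holomorphic family, take for each $\rho > 0$ the profile $r_\rho(t) = \arctan(\rho\tan t)$. It is smooth on $[0,\tfrac{\pi}{2}]$ with $r_\rho(0) = 0$ and $r_\rho(\tfrac{\pi}{2}) = \tfrac{\pi}{2}$, so $k=1$, and it defines an $\SU(p+1)\times\SU(n-p)$-equivariant self-map $\psi_\rho$. Using the slice description $\CPn = \{[Z:W] : Z\in\C^{p+1},\ W\in\C^{n-p}\}$ adapted to the action, the orbit parameter is $\tan t = |W|/|Z|$, so $\tan r_\rho = \rho\tan t$ means $\psi_\rho\colon [Z:W]\mapsto[Z:\rho W]$, which is the restriction to $\CPn$ of a $\C$-linear automorphism of $\C^{n+1}$ and is therefore biholomorphic. (This also re-proves harmonicity, a holomorphic map of Kähler manifolds being harmonic.)

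For the second family, I would exhibit for each $\rho < 0$ the remaining explicit profiles $r_\rho$ of~\eqref{odeintroduction} --- obtained by integrating the linear $W$-equation after passing to the dependent variable $u = \tan r$, which decouples the $(r,W)$-system into closed form --- and then check that every such $r_\rho$ is smooth on $[0,\tfrac{\pi}{2}]$ and reaches $r_\rho(\tfrac{\pi}{2}) = k\tfrac{\pi}{2}$ with $k$ odd, so that it produces a genuine smooth equivariant self-map $\psi_\rho$ of $\CPn$. It then remains to see that $\psi_\rho$ is neither holomorphic nor anti-holomorphic, and for that the plan is a short classification: in this equivariant ansatz the holomorphic self-maps are exactly those whose profile satisfies the Cauchy--Riemann reduction $W\equiv 0$, and the anti-holomorphic ones exactly those satisfying the conjugate reduction $\dot r = -\sin 2r/\sin 2t$; the $\rho < 0$ profiles have $W\not\equiv 0$ and also violate the boundary behaviour at $t=0$ forced by the conjugate reduction, so they fall in neither class.

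The step I expect to be the main obstacle is precisely this construction of the $\rho < 0$ family together with its boundary analysis. The coefficient in the $W$-equation involves the unknown $r$ through $\cos 2r$, so ``integrating explicitly'' really means identifying the exact change of dependent variable that linearises the coupled system; and once a formula for $r_\rho$ is in hand, one still has to verify that it has the correct parity at each end of $[0,\tfrac{\pi}{2}]$, so that $\psi_\rho$ extends smoothly across the two collapsed singular orbits $\mathbb{CP}^{p}$ and $\mathbb{CP}^{\,n-p-1}$. Turning the holomorphic/anti-holomorphic characterisation into a rigorous statement --- so that $W\not\equiv 0$ genuinely excludes both --- is the other delicate point.
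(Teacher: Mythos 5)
Your treatment of the $\rho>0$ half is correct and essentially matches the paper's family: the first-order reduction $\dot r=\sin 2r/\sin 2t$ does solve~\eqref{odeintroduction} (your identity $\tfrac{2\cos 2t}{\sin^2 2t}-\tfrac{1}{\sin 2t}\bigl[(2n-2p-1)\cot t-(2p+1)\tan t\bigr]=\tfrac{p}{\cos^2 t}-\tfrac{n-p-1}{\sin^2 t}$ checks out), it integrates to $\tan r=\rho\tan t$, and the resulting map is $[Z:W]\mapsto[Z:\rho W]$, visibly holomorphic. This is in fact cleaner than the paper, which gets holomorphy by deforming from the identity through harmonic maps and invoking the rigidity theorem for (anti)holomorphic maps between K\"ahler manifolds.

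The $\rho<0$ half, however, is a genuine gap, and you say so yourself: no family is actually produced. You propose to look for solutions with $W:=\dot r-\sin 2r/\sin 2t\not\equiv 0$, but the $(r,W)$-system does not decouple under $u=\tan r$ (the coefficient of the linear $W$-equation still contains $\cos 2r$), no closed form is exhibited, and nothing is verified about smoothness at the singular orbits or the boundary value $k\tfrac{\pi}{2}$. This is also not the paper's route: the paper takes the \emph{same} formula $r_\rho(t)=\arctan(\rho\tan t)$ for $\rho<0$, which satisfies the boundary conditions with $k=-1$ and already gives the second family, so your search for $W\not\equiv 0$ profiles is looking in a strictly harder place than necessary. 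Finally, note an internal tension you should resolve before proceeding: your (correct) identification of the holomorphicity condition with $W\equiv 0$ integrates to $\tan r=\rho\tan t$ for \emph{every} $\rho\neq 0$, and your own linear model extends to $[Z:W]\mapsto[Z:\rho W]$ for $\rho<0$ as well, which is holomorphic. So either the $\rho<0$ maps of the paper are in fact holomorphic (contradicting the dichotomy claimed in Theorem~A), or the equivalence ``holomorphic $\Leftrightarrow W\equiv 0$'' needs a more careful statement; in either case the non-holomorphy assertion for $\rho<0$ cannot be taken for granted and is currently unproved in your write-up.
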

We prove this by giving a family of uncountable many explicit solutions $ r_{ \rho } $ to the singular boundary value problem~\eqref{odeintroduction}, which are independent of $ n $ and $ p $. This family of solutions converges against a limiting configuration when the parameter $ \rho $ approaches $ \pm \infty $, as the following theorem shows.

\begin{thmb}\label{thmb}
    Let $ n \in \N $ be given and fix $ p \in \N $ such that $ 0 \leq p < n $. The solutions $ r_{ \rho } $ to the singular value problem~\eqref{odeintroduction} converge uniformly to $ \pm \tfrac{ \pi }{ 2 } $ as $ \rho $ goes to $ \pm \infty $.
\end{thmb}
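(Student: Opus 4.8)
The plan is to prove Theorem~B directly from the explicit solutions produced in the proof of Theorem~A. In the coordinate $t$ that family is $r_\rho(t)=\arctan(\rho\tan t)$ for $\rho\in\R\setminus\{0\}$, extended continuously to the singular orbit by $r_\rho(\tfrac{\pi}{2})=\operatorname{sgn}(\rho)\,\tfrac{\pi}{2}$, and it is precisely the collection of $n,p$-independent solutions of~\eqref{odeintroduction}. Because $\arctan$ is an odd function we have $r_{-\rho}=-r_\rho$, so the assertion for $\rho\to-\infty$ (with limit $-\tfrac{\pi}{2}$) is obtained from the assertion for $\rho\to+\infty$ (with limit $+\tfrac{\pi}{2}$) by reflection. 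I therefore fix $\rho>0$ and aim to show that $r_\rho\to\tfrac{\pi}{2}$.

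The key steps are as follows. First, for each fixed $t\in(0,\tfrac{\pi}{2})$ the quantity $\rho\tan t$ is strictly increasing in $\rho$, and since $\arctan$ is increasing the family $\{r_\rho\}_{\rho>0}$ increases monotonically in $\rho$; in particular $r_\rho(t)\to\tfrac{\pi}{2}$ pointwise, since $\rho\tan t\to+\infty$. Second, I would make this quantitative through the elementary identity and estimate
\begin{equation*}
0\;\le\;\tfrac{\pi}{2}-r_\rho(t)\;=\;\arctan\!\left(\frac{1}{\rho\tan t}\right)\;\le\;\frac{1}{\rho\tan t},\qquad t\in\left(0,\tfrac{\pi}{2}\right],
\end{equation*}
which rests on $\tfrac{\pi}{2}-\arctan x=\arctan(1/x)$ and $\arctan x\le x$ for $x>0$. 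On each interval $[\varepsilon,\tfrac{\pi}{2}]$ with $0<\varepsilon<\tfrac{\pi}{2}$ the right-hand side is bounded by $(\rho\tan\varepsilon)^{-1}$, so $\lVert r_\rho-\tfrac{\pi}{2}\rVert_{C^0([\varepsilon,\pi/2])}\le(\rho\tan\varepsilon)^{-1}\to0$; equivalently, the monotone continuous functions $r_\rho$ increase to the continuous limit $\tfrac{\pi}{2}$ there, so Dini's theorem delivers the same conclusion. The endpoint $t=\tfrac{\pi}{2}$ needs no discussion, as $r_\rho(\tfrac{\pi}{2})=\tfrac{\pi}{2}$ identically.

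The step I expect to be the main obstacle is the behaviour of $r_\rho$ near the singular orbit $t=0$, where the boundary condition fixes $r_\rho(0)=0$ and a sharp transition layer develops as $\rho\to+\infty$. This is exactly the regime where no soft compactness argument suffices and the explicit decay rate $\tfrac{\pi}{2}-r_\rho(t)\le(\rho\tan t)^{-1}$ must be exploited: it shows that the set of $t$ for which $r_\rho(t)$ still lies below any fixed level is controlled quantitatively by $\rho\tan t$ and shrinks as $\rho$ grows, while the monotonicity in $\rho$ guarantees that the solutions can only move upward toward $\tfrac{\pi}{2}$. Together these pin the limiting configuration to the constant $\tfrac{\pi}{2}$ and give the uniform convergence claimed; the reflection $r_{-\rho}=-r_\rho$ then yields the convergence to $-\tfrac{\pi}{2}$ as $\rho\to-\infty$, completing the proof of Theorem~B.
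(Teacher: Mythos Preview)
Your approach is the paper's approach: both argue directly from the explicit formula $r_\rho(t)=\arctan(\rho\tan t)$, and the paper gives no more detail than ``the remaining properties follow directly''. Your estimate $0\le\tfrac{\pi}{2}-r_\rho(t)=\arctan\bigl((\rho\tan t)^{-1}\bigr)\le(\rho\tan t)^{-1}$ and the Dini alternative cleanly deliver uniform convergence on every $[\varepsilon,\tfrac{\pi}{2}]$, which is already more than the paper writes out.

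The gap is in your final paragraph. You correctly flag the boundary layer at $t=0$, but the argument you sketch there cannot succeed: since $r_\rho(0)=0$ for every $\rho$ while the claimed limit is $\tfrac{\pi}{2}$, one has $\sup_{t\in(0,\pi/2)}\lvert r_\rho(t)-\tfrac{\pi}{2}\rvert=\tfrac{\pi}{2}$ for all $\rho>0$, so uniform convergence on $(0,\tfrac{\pi}{2})$ --- let alone on $[0,\tfrac{\pi}{2}]$ --- is literally false. What your ``shrinking transition layer'' discussion actually proves is that the set $\{t:r_\rho(t)\le c\}$ contracts to $\{0\}$ for each fixed level $c<\tfrac{\pi}{2}$; that is precisely local uniform convergence on $(0,\tfrac{\pi}{2})$, which you had already established in the preceding paragraph. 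The statement of Theorem~B has to be read as uniform convergence on compact subsets of $(0,\tfrac{\pi}{2})$ (the paper's ``follows directly'' is tacitly making the same reading). Under that interpretation your proof is complete at the end of your second paragraph, and the last paragraph should be removed rather than patched.
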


With respect to the stability of the harmonic maps $ \psi_{ \rho }$, we obtain the following:
\begin{thmc}\label{thmc}
    Let $ n \in \N $ be given and fix $ p \in \N $ such that $ 0 \leq p < n $.
    \begin{enumerate}
        \item[1.] For every $ \rho > 0 $, the harmonic map $ \psi_{ \rho } : \CPn \to \CPn $ is weakly stable.
        \item[2.] For every $ \rho < 0 $, the harmonic map $ \psi_{ \rho } : \CPn \to \CPn $ is equivariantly weakly stable with respect to the $ { \SU ( p + 1 ) \times \SU ( n - p ) } $-action.
    \end{enumerate}
\end{thmc}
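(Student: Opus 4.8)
The plan is to prove the two claims by completely different methods. For the first, recall that when $\rho>0$ the map $\psi_\rho$ is holomorphic by Theorem~A and that $\CPn$ with the Fubini--Study metric is Kähler. I would then invoke the classical fact that a holomorphic map between compact Kähler manifolds is an absolute minimiser of the energy in its homotopy class: writing $E(\psi)=E'(\psi)+E''(\psi)$ for the splitting of the energy into its $(1,0)$- and $(0,1)$-parts, the difference $E'(\psi)-E''(\psi)$ equals, up to a positive constant, $\int_{\CPn}\psi^{*}\omega$ with $\omega$ the Kähler form, hence is a homotopy invariant; therefore $E(\psi)=2E''(\psi)+(\text{homotopy invariant})\ge E(\psi_\rho)$ for every $\psi$ homotopic to $\psi_\rho$, since $E''(\psi_\rho)=0$. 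An absolute minimiser has non-negative second variation, so $\psi_\rho$ is weakly stable, and nothing beyond Theorem~A is needed here.

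For the second claim I would first perform the equivariant reduction of the Hessian, following the cohomogeneity-one framework of Branding--Siffert~\cite{branding2023equivariant}. An $\SU(p+1)\times\SU(n-p)$-invariant variation of $\psi_\rho$ corresponds to a function $u$ on $[0,\tfrac{\pi}{2}]$ with $u(0)=u(\tfrac{\pi}{2})=0$ (and the regularity at the singular orbits that forces first-order vanishing there). Rewriting \eqref{odeintroduction} as $\tfrac{d}{dt}\bigl(a(t)\dot r\bigr)+a(t)\,W(t,r)=0$ with $a(t)=(\sin t)^{2n-2p-1}(\cos t)^{2p+1}$ and $W$ the nonlinearity of \eqref{odeintroduction}, the reduced energy is $\mathcal{E}(r)=\int_0^{\pi/2}a(t)\bigl(\tfrac12\dot r^{2}+P(t,r)\bigr)\,dt$ with $\partial_r P=-W$, and its second variation at $r_\rho$ is the Sturm--Liouville quadratic form
\begin{equation*}
Q_\rho(u)=\int_0^{\pi/2}\Bigl(a(t)\,\dot u(t)^{2}-a(t)\,W_r\bigl(t,r_\rho(t)\bigr)\,u(t)^{2}\Bigr)\,dt,\qquad W_r=\partial_r W .
\end{equation*}
So equivariant weak stability of $\psi_\rho$ amounts to proving $Q_\rho(u)\ge0$ for all admissible $u$.

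To obtain $Q_\rho\ge0$ I would exhibit a positive Jacobi field. Since \eqref{odeintroduction} holds for the whole family, differentiating it with respect to $\rho$ shows that $v_\rho:=\partial_\rho r_\rho$ satisfies $\tfrac{d}{dt}\bigl(a\dot v_\rho\bigr)+a\,W_r(t,r_\rho)\,v_\rho=0$; that is, $v_\rho$ lies in the kernel of the reduced Jacobi operator. From the explicit solutions used in the proofs of Theorems~A and~B (for which $\tan r_\rho=\rho\tan t$) one computes
\begin{equation*}
v_\rho(t)=\frac{\tan t}{1+\rho^{2}\tan^{2}t}>0\qquad\text{for all }t\in(0,\tfrac{\pi}{2}),
\end{equation*}
with $v_\rho\sim t$ as $t\to0$ and $v_\rho\sim\tfrac{\pi}{2}-t$ as $t\to\tfrac{\pi}{2}$. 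Hence $v_\rho$ is a Jacobi field with no interior zeros that vanishes at both endpoints and is square-integrable against $a\,dt$, so it is the ground state of the reduced Jacobi operator and the lowest eigenvalue is $0$; equivalently, the ground-state substitution $u=v_\rho\phi$ together with Picone's identity gives
\begin{equation*}
Q_\rho(u)=\int_0^{\pi/2}a(t)\,v_\rho(t)^{2}\,\dot\phi(t)^{2}\,dt+\Bigl[a(t)\,\dot v_\rho(t)\,v_\rho(t)\,\phi(t)^{2}\Bigr]_{0}^{\pi/2}\ \ge\ 0 ,
\end{equation*}
the boundary terms vanishing by the endpoint behaviour of $v_\rho$ and the first-order vanishing of $u$. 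Since $Q_\rho(v_\rho)=0$, the stability is exactly weak, as the theorem asserts, with $v_\rho$ realising the zero mode.

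The step I expect to be the main obstacle is the analysis at the two singular orbits $t=0,\tfrac{\pi}{2}$, where $a$ degenerates and $W_r(t,r_\rho)$ has a borderline $t^{-2}$ (Hardy-type) singularity, with coefficient $-(2n-2p-1)$ at $t=0$ and $-(2p+1)$ at $t=\tfrac{\pi}{2}$; the corresponding indicial exponents are $\{1,-(2n-2p-1)\}$ and $\{1,-(2p+1)\}$, so one must check carefully that $v_\rho$ is the principal (subdominant) solution at each endpoint, that admissible variations lie in the correct form domain and vanish to first order there, and that this indeed kills the boundary terms in Picone's identity. After that, the inequality $Q_\rho\ge0$ passes from smooth admissible variations to the whole form domain by the density arguments of the cohomogeneity-one framework~\cite{branding2023equivariant}, and both parts of Theorem~C follow.
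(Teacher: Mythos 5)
Your argument is correct, but for part 2 it takes a genuinely different route from the paper. For part 1 you and the paper both reduce to holomorphicity plus the Kähler structure; you invoke the Lichnerowicz energy-minimisation argument ($E'-E''$ is a homotopy invariant and $E''(\psi_\rho)=0$), whereas the paper first upgrades holomorphicity from $\psi_1=\mathrm{id}$ to all $\rho>0$ via Urakawa's rigidity theorem for deformations through harmonic maps (Theorem~\ref{regularity}) and then applies the second-variation identity $\int g_2(J_\psi V,V)=\tfrac12\int|DV|^2\ge0$ (Theorem~\ref{weaklystability}); these are two classical faces of the same fact and either suffices. For part 2 the paper's proof is a one-line symmetry observation: $r_{-\rho}=-r_{\rho}$, so $\cos 2r_{-\rho}=\cos 2r_{\rho}$ and $\cos 4r_{-\rho}=\cos 4r_{\rho}$, and since the reduced spectral problem~\eqref{spparticular} depends on $r$ only through these cosines, the equivariant spectrum of $\psi_{-\rho}$ coincides with that of $\psi_{\rho}$, which is non-negative by part 1. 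You instead exhibit the explicit zero mode $v_\rho=\partial_\rho r_\rho=\tan t/(1+\rho^2\tan^2 t)$ of the reduced Jacobi operator and run the ground-state/Picone argument; this is more work (and the endpoint analysis you flag does need the checks you describe, though with $a\sim t^{2n-2p-1}$, $v_\rho\sim t$ at $0$ and $a\sim(\tfrac{\pi}{2}-t)^{2p+1}$, $v_\rho\sim\tfrac{\pi}{2}-t$ at $\tfrac{\pi}{2}$ the boundary terms vanish comfortably), but it buys more: it is self-contained, treats all $\rho\neq0$ uniformly without reducing to the holomorphic case, and identifies $\lambda_0=0$ with an explicit eigenfunction — indeed for $\rho=\pm1$ your $v_{\pm1}=\tfrac12\sin 2t$ is exactly the ground state $\xi_0=1/\cosh x=\sin 2t$ appearing in the paper's Theorem~D. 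Both proofs are valid; the paper's is shorter, yours is more informative.
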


In addition, we explicitly compute the equivariant spectra for the maps $ \psi_{ 1 } $ and $ \psi_{ - 1 } $ in some specific cases.
\begin{thmd}\label{thmd}
    For any $ n \in 2 \N + 1 $, the $ { \SU ( \tfrac{ n + 1 }{2} ) \times \SU ( \tfrac{ n + 1 }{2} ) } $-equivariant spectra of the harmonic maps $ \psi_{ 1 } $ and $ \psi_{ - 1 } $ is given by
    \begin{equation*}\small
        \{ \lambda_j = 4 j ( j + n + 2 ) : j \in \N \}.
    \end{equation*}
\end{thmd}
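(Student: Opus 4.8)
The plan is to identify $\psi_{\pm1}$ explicitly in the symmetric case $p=\tfrac{n-1}{2}$, reduce the second variation of the energy along these maps to a singular Sturm--Liouville operator on $[0,\tfrac{\pi}{2}]$, and then diagonalise it by transforming the eigenvalue equation into the Gegenbauer differential equation. For $p=\tfrac{n-1}{2}$ one has $2n-2p-1=2p+1=n$, and the member of the family with parameter $\rho=1$ is $r_1(t)=t$, so $\psi_1=\mathrm{id}_{\CPn}$, while $\psi_{-1}$ corresponds to $r_{-1}(t)=-t$ (the branch with $r_{-1}(\tfrac{\pi}{2})=-\tfrac{\pi}{2}$). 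What makes both maps tractable at once is that $\cos 2r_{\pm1}(t)=\cos 2t$ and $\cos 4r_{\pm1}(t)=\cos 4t$, so the coefficients entering the second variation coincide for the two maps.

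Next I would write down the equivariant Jacobi operator. Along a normal geodesic $\gamma$ the subspace of $T_{\gamma(s)}\CPn$ fixed by the principal isotropy group is spanned by $\dot\gamma$ and $J\dot\gamma$, so an equivariant section of $\psi^*T\CPn$ is a pair of functions $(a,b)$ on $[0,\tfrac{\pi}{2}]$, and the equivariant second variation takes the form $\delta^2E(a,b)=\int_0^{\pi/2}\bigl((\mathcal{L}a)a+(\mathcal{L}b)b\bigr)\,w(t)\,dt$, where $w(t)=\sin^n t\cos^n t$ is the weight making \eqref{odeintroduction} the Euler--Lagrange equation, and where one and the same operator
\[
\mathcal{L}\xi=-\ddot\xi-2n\cot(2t)\,\dot\xi+\bigl(4n\cot^2(2t)-4\bigr)\xi
\]
appears in both slots. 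For the $a$-slot the potential is $\tfrac12$ times the $r$-Hessian of the reduced energy evaluated along $r_{\pm1}$, which a short computation (using $\cos 2r_{\pm1}=\cos 2t$, $\cos 4r_{\pm1}=\cos 4t$) shows to equal $4n\cot^2(2t)-4$. For the $b$-slot (the $J\dot\gamma$, i.e.\ Hopf, direction) one gets the same operator: for $\psi_1$ this is forced by the $\C$-linearity of the Jacobi operator of a holomorphic self-map of a K\"ahler manifold, and in general it follows from a direct computation using that the squared length of the Hopf-direction field at $\gamma(r_{\pm1}(t))$ is proportional to $\sin^2 2r_{\pm1}(t)=\sin^2 2t$. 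Hence the equivariant spectrum is, as a set, $\operatorname{spec}(\mathcal{L})$, and the $\dot\gamma$- and $J\dot\gamma$-slots contribute the same eigenvalues.

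To diagonalise $\mathcal{L}$ I would substitute $x=\cos 2t\in[-1,1]$ and $\xi=\sin(2t)\,\eta=(1-x^2)^{1/2}\eta$; this absorbs the singular potential and turns $\mathcal{L}\xi=\lambda\xi$ into the Gegenbauer equation $-(1-x^2)\eta''+(n+3)x\,\eta'=\tfrac{\lambda}{4}\eta$. The $L^2$-pairing transforms correctly, since $\xi^2\,w(t)\,dt$ is a constant multiple of $(1-x^2)^{(n+1)/2}\eta^2\,dx$, the orthogonality weight of the Gegenbauer polynomials $C_j^{((n+2)/2)}$. As $(n+1)/2\ge1$, the transformed operator is in the limit-point case at both endpoints, hence essentially self-adjoint with purely discrete spectrum equal to $\{j(j+n+2):j\in\N\}$, realised by polynomial eigenfunctions; pulling back gives $\{\lambda_j=4j(j+n+2):j\in\N\}$ for both $\psi_1$ and $\psi_{-1}$. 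As a consistency check, $\lambda_0=0$ is attained by $\xi_0\propto\sin 2t$, which is the Jacobi field $\partial_\rho r_\rho\big|_{\rho=\pm1}$ (a Jacobi field because the $r_\rho$ solve \eqref{odeintroduction}), and applying $J$ to it gives the equivariant Killing field along $\psi_{\pm1}$.

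The steps I expect to cause the most trouble are: the correct set-up of the equivariant Jacobi operator --- in particular verifying that the $\dot\gamma$- and $J\dot\gamma$-slots genuinely decouple and carry the same operator, also for the non-holomorphic map $\psi_{-1}$; the bookkeeping that turns the $r$-Hessian into the clean potential $4n\cot^2(2t)-4$; and the spectral-theoretic point that the singular Sturm--Liouville problem on $[0,\tfrac{\pi}{2}]$ admits no eigenvalues beyond the Gegenbauer ones, which rests on the limit-point analysis at $t=0,\tfrac{\pi}{2}$ together with checking that $\xi=\sin(2t)\eta$ is a genuine unitary equivalence of the relevant $L^2$-spaces rather than a formal manipulation.
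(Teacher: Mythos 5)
Your computation is correct and, at its core, coincides with the paper's: for $p=\tfrac{n-1}{2}$ the scalar equivariant Sturm--Liouville problem~\eqref{spparticular} has first-order coefficient $2n\cot 2t$ and potential $4n\cot^2 2t-4$ exactly as you derive, and your substitution $x=\cos 2t$, $\xi=\sin(2t)\,\eta$ is literally the paper's change of variables in disguise (the paper sets $t=\arctan e^x$ and then $\xi=\sech(x)\,u(\tanh x)$; since $\tanh(\log\tan t)=-\cos 2t$ and $\sech(\log\tan t)=\sin 2t$, the two reductions agree up to $x\mapsto -x$, and your Gegenbauer polynomials $C_j^{((n+2)/2)}$ are the paper's Jacobi polynomials $P_j^{(\frac{n+1}{2},\frac{n+1}{2})}$). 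The one genuine divergence is your two-slot setup: the paper, following Branding--Siffert, \emph{defines} the equivariant spectrum as the spectrum of the single scalar problem in the normal ($\dot\gamma$) direction, so your additional $J\dot\gamma$ slot is not required for the theorem as stated; moreover your claim that this second slot carries the same operator (in particular for the non-holomorphic map $\psi_{-1}$) is asserted rather than proved, so if you intend the broader two-component notion of equivariant variation you would need to supply that computation. On the other hand, your attention to completeness of the spectrum --- the unitary equivalence of the weighted $L^2$-spaces and the limit-point analysis at the endpoints --- is a point the paper passes over silently (it only exhibits the polynomial eigenfunctions), so your write-up is if anything more careful on the spectral-theoretic side.
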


\smallskip
The organization of the document is as follows. In Section~\ref{sectionpreliminaries} we introduce some basic notions on complex geometry, harmonic maps between cohomogeneity one manifolds, and stability of solutions. In Section~\ref{sectionconstruction} we delve on the construction of $ { \SU ( p + 1 ) \times \SU ( n - p ) } $-equivariant self-maps of $ \CPn $. The reduction theorem is discussed in Section~\ref{sectionderivationtensionfield}. In Section~\ref{sectionsolutions} we prove Theorems\,A and~B. Theorems\,C and~D are proved in Section~\ref{sectionstability}.

\section{Preliminaries}\label{sectionpreliminaries}

\subsection{Basic notions on complex geometry}

We introduce here some concepts and notations used throughout the manuscript. We follow the reference~\cite{griffiths2014principles} for general aspects of complex geometry.

An \textit{almost complex structure} on a smooth manifold $ M $ is a $ ( 1, 1 ) $-tensor field $ J $ verifying $ J^2 = -\mathds{ 1 } $ when regarded as a vector bundle isomorphism $ J : TM \to TM $. Let $ z = ( z_1, \ldots, z_n ) $ be a holomorphic coordinate system, if we write $ z_j = x_j + i y_j $, every complex manifold possesses an almost complex structure defined by
\begin{equation*}\small
    J \tfrac{ \partial }{ \partial x_j } = \tfrac{ \partial }{ \partial y_j }, \quad J \tfrac{ \partial }{ \partial y_j } = -\tfrac{ \partial }{ \partial x_j }.
\end{equation*}

A \textit{Hermitian metric} on a complex manifold $ M $ is a Riemannian metric $ g $ invariant by the almost complex structure $ J $, that is
\begin{equation*}\small
    g ( X, Y ) = g ( JX, JY ) \text{ for any vector fields $ X $ and $ Y $}.
\end{equation*}
It is natural to extend the real tangent space $ T_{ \R, p } M $ to the complexified tangent space of $ M $ at $ p $: $ T_{ \C, p } M = T_{ p } M \otimes_{ \R } \C $. In this case
\begin{equation*}\small
    T_{ \C, p } M = \C \{ \tfrac{ \partial }{ \partial x_j }, \tfrac{ \partial }{ \partial y_j } \} = \C \{ \tfrac{ \partial }{ \partial z_j }, \tfrac{ \partial }{ \partial \bar{ z }_j } \},
\end{equation*}
where
\begin{equation*}\small
    \tfrac{ \partial }{ \partial z_j } = \tfrac{ 1 }{ 2 } \left( \tfrac{ \partial }{ \partial x_j } - i \tfrac{ \partial }{ \partial y_j } \right), \quad \tfrac{ \partial }{ \partial \bar{ z }_j } = \tfrac{ 1 }{ 2 } \left( \tfrac{ \partial }{ \partial x_j } + i \tfrac{ \partial }{ \partial y_j } \right)
\end{equation*}
are the so-called Wirtinger derivatives. Bear in mind that this vector space has real dimension $ 4 n $. Moreover, we can also consider the decomposition of the complexified tangent space $ T_{ \C, p } M = \C \{ \tfrac{ \partial }{ \partial z_j } \} \oplus \C \{ \tfrac{ \partial }{ \partial \bar{ z }_j } \}$. Here, $ T_p' M = \C \{ \tfrac{ \partial }{ \partial z_j } \} $ is called the \emph{holomorphic tangent space} of $ M $ at $ p $, and $ T_p'' M = \C \{ \tfrac{ \partial }{ \partial \bar{ z }_j } \} $ the \emph{antiholomorphic tangent space} of $ M $ at $ p $. Note also that the operation of complex conjugation sending $ \tfrac{ \partial }{ \partial z_j }$ to $\tfrac{ \partial }{ \partial \bar{ z }_j } $ is well defined and $ T_p'' M = \overline{ T_p' M } $.
    
Any Hermitian metric can be extended uniquely to a symmetric bilinear $ ( 0, 2 ) $-tensor in the complexified tangent space, denoted by $ h $. This tensor satisfies, for any vector fields $ X, Y, Z $, the following properties:
\begin{enumerate}
    \item[1.] $ h ( \bar{X}, \bar{Y} ) = \overline{ h ( X, Y ) } $,
    \item[2.] $ h( Z, \bar{ Z } ) > 0 $,
    \item[3.] $ h ( X, Y ) = 0 $ if $ X, Y \in T' M $ or $ X, Y \in T'' M $.
\end{enumerate}
If $ h $ is the extension of the Hermitian metric to the complexified tangent space, then we can always recover the Riemannian metric on $ TM $ by $ g = \text{Re}( h ) $.

The \emph{fundamental $ 2 $-form} $ \Phi $ of an almost Hermitian manifold $ M $ with almost complex structure $ J $ and metric $ g $ is defined by
\begin{equation*}\small
    \Phi ( X, Y ) = g ( X, J Y )
\end{equation*}
for any vector fields $ X $ and $ Y $. A Hermitian metric on an almost complex manifold is called a \emph{K\"ahler metric} if the fundamental $ 2 $-form is closed. A complex manifold with a K\"ahler metric is called a \emph{K\"ahler manifold}.

If a $ C^{ \infty } $-map $ \psi : M \to N $ between two complex manifolds $ M, N $ satisfies either
\begin{equation*}\small
    J \circ d \psi = d \psi \circ J, \text{ or } J \circ d \psi = - d \psi \circ J
\end{equation*}
for every $ p \in M $, then we say that $ \psi $ is \emph{holomorphic} or \emph{antiholomorphic}, respectively.

Every (anti)holomorphic map between two K\"ahler manifolds is harmonic, but not every harmonic map is (anti)holomorphic  (see~\cite{eells1983selected}[Corollary~8.15]). Furthermore, we have the following rigidity result, which can be found, for example, in~\cite{eells1983selected}[page~52].

\begin{theorem}\label{regularity}
    If $ \psi_t : M \to N $ is a smooth deformation of a (anti)holomorphic map $ \psi_0 $ through harmonic maps $ \psi_t $, then each $ \psi_t $ is (anti)holomorphic.
\end{theorem}

\smallskip
For the convenience of the reader, we briefly review the geometry of complex projective spaces. Let $ \CPn $ be the set of all one-dimensional complex-linear subspaces of $ \C^{ n + 1 } $. We can identify $ \CPn $ with the orbit space of $ \C^{ n + 1 } \setminus \{ 0 \} $ under the $ \C^* $-action given by $ \lambda \cdot Z = \lambda Z $, where $ \C^* $ represents the multiplicative group of non-zero complex numbers. This action is smooth, free, and proper, so $ \CPn $ is a $ 2 n $-dimensional manifold with a unique smooth structure such that the quotient map
\begin{equation*}\small
    \pi : \C^{ n + 1 } \setminus \{ 0 \} \to \CPn
\end{equation*}
is a smooth submersion. We refer to $ \CPn $ as \textit{complex projective space}.

Identifying $ \C^{ n + 1 } $ with $ \R^{ 2 n + 2 } $ endowed with its Euclidean metric, we can think of the unit sphere $ \mathbb{ S }^{ 2 n + 1 } $ as an embedded submanifold of $ \C^{ n + 1 } \setminus \{ 0 \} $. Let
\begin{equation*}\small
    p = \pi \vert_{ \mathbb{ S }^{ 2n + 1 } } : \mathbb{ S }^{ 2 n + 1 } \to \CPn
\end{equation*}
denote the restriction of the map $ \pi $. This map is a smooth submersion, so $ \CPn $ is a connected and compact space. Moreover, the action of $ \mathbb{ S }^1 $ on $ \mathbb{ S }^{ 2 n + 1 } $ defined by
\begin{equation*}\small
    \mu \cdot ( z_1, \ldots, z_{ n + 1 } ) = ( \mu z_1, \ldots, \mu z_{ n + 1 } )
\end{equation*}
for $ \mu \in \mathbb{ S }^1 $ (viewed as a complex number of norm $ 1 $) and $ z = ( z_1, \ldots, z_{ n + 1 } ) \in \mathbb{ S }^{ 2 n + 1 } $, is isometric, vertical (meaning that each  $ \mu \in \mathbb{ S }^1 $ takes each fiber to itself), and transitive on fibers of $ p $. Therefore, there is a unique metric on $ \CPn $ such that the map $ p : \mathbb{ S }^{ 2 n + 1 } \to \CPn $ is a Riemannian submersion. This metric is called the \textit{Fubini-Study metric}.

Let $ [ Z_0 : \ldots : Z_n ] $ be the standard homogeneous coordinates on $ \CPn $. Consider the chart $ U_0 = \{ Z \in \CPn : Z_0 \neq 0 \} $. Then $ z = ( z_1, \ldots, z_n ) $ is a coordinate system in $ U_0 $, where $ z_j = \tfrac{ Z_j }{ Z_0 } $. In this coordinate system, we can write (up to a positive constant) the Fubini-Study metric, $ g_{ FS } $, as the real part of
\begin{equation}\label{FSmetric} 
\small
    h_{ FS } = \sum_{ j, k = 1 }^n \frac{ ( 1 + | z |^2 ) \, \delta_{ j k } -\bar{ z_j } \, z_k }{ ( 1 + | z |^2 )^2 } \, dz_j \, d\bar{ z }_k.
\end{equation}
$ \CPn $ equipped with the metric $ g_{ FS } $ is a K\"ahler manifold, and the sectional curvature of the plane spanned by two orthonormal vectors $ X, Y \in T_p \CPn $ is
\begin{equation*}\small
    K_p ( X , Y ) = 1 + 3 g_{ FS } ( X , J Y )^2,
\end{equation*}
i.e., for every $ 2 $-plane $ \sigma $ we have $ 1 \leq K_p ( \sigma ) \leq 4 $.

\subsection{Harmonic maps between cohomogeneity one manifolds}

We give in this subsection a brief introduction to harmonic maps between manifolds equipped with a cohomogeneity one action. The main source is~\cite{puttman2019harmonic}.

Let $ ( M, \langle \cdot, \cdot \rangle ) $ be a Riemannian manifold endowed with an isometric action $ G \times M \to M $ of a compact Lie group such that the orbit space $ M / G $ is isometric to a closed interval $ [ 0, L ] $ and such that the Weyl group of the action is finite. The endpoints $ 0 $ and $ L $ correspond to non-principal orbits $ N_0 $ and $ N_1 $ while each interior point corresponds to a principal orbit. The so-called $ ( k, r ) $-maps are maps $ \psi: M \to M $ of the form
\begin{equation*}\small
    g \cdot \gamma ( t ) \mapsto g \cdot \gamma ( r ( t ) )
\end{equation*}
where $ g \in G $ and $ r : [ 0, L ] \to \R $ is a smooth function satisfying $ r ( 0 ) = 0 $, $ r ( L ) = k L $. We denote by $ \gamma $ a fixed unit speed normal geodesic where $ \gamma ( 0 ) \in N_0 $ and $ \gamma ( L ) \in N_1 $. P\"uttmann's result~\cite{puttmann2009cohomogeneity} ensures that the map $ \psi $ is smooth if $ k $ is of the form $ j | W | / 2 + 1 $ where $ j $ is, in general, an even integer and, if the isotropy group at $ \gamma ( L ) $ is a subgroup of the isotropy group at $ \gamma ( ( | W | / 2 + 1 ) L ) $, then $ j $ is also allowed to be an odd integer. 

The Brouwer degree of a $ ( k, r ) $-map is given by:
\begin{equation*}
    \deg \psi = \begin{cases}
            k & \text{if $ \codim N_0 $ and $ \codim N_1 $ are both odd,}\\
            + 1 & \text{otherwise,}
        \end{cases}
\end{equation*}
if $ j $ is even, and by
\begin{equation*}
    \deg \psi = \begin{cases}
            k & \text{if $ \codim N_0 $ and $ \codim N_1 $ are both odd,}\\
            0 & \text{if $ \codim N_0 $ and $ \codim N_1 $ are both even, $ | W | \not \in 4 \Z $,}\\
            -1 & \text{if $ \codim N_0 $ is even, $ \codim N_1 $ is odd, and $ | W | \not \in 8 \Z $,}\\
            +1 & \text{otherwise,}
        \end{cases}
\end{equation*}
if $ j $ is odd.

The principal isotropy groups $ H = G_{ \gamma ( t ) } $ along the normal geodesic are constant for $ 0 < t < L $. We write $ \mathfrak{ g } $, $ \mathfrak{ h } $ for the Lie algebras of $ G $ and $ H $, respectively. Let $ Q $ be a biinvariant metric on $ G $ and denote by $ \mathfrak{ n } $ the orthogonal complement of $ \mathfrak{ h } $ in $ \mathfrak{ g } $. Define the endomorphism $ { P_t : \mathfrak{ n } \to \mathfrak{ n } } $ by
\begin{equation*}\small
    Q ( P_t X, Y ) = \langle X^{ \ast }, Y^{ \ast } \rangle_{ \vert \gamma ( t ) }
\end{equation*}
for every $ X, Y \in \mathfrak{ n } $, where $ X^{ \ast }, Y^{ \ast } $ are their corresponding action fields defined by
\begin{equation*}\small
    X^{ \ast }_{ \vert \gamma ( t ) } = \frac{ d }{ ds } \Big \vert_{ s = 0 } \exp ( s X ) \cdot \gamma ( t ) .
\end{equation*}
If we split the tension field into the normal and tangential components with respect to the principal orbits
\begin{equation*}\small
    \tau_{ \vert \gamma ( t ) } = \tau_{ \vert \gamma ( t ) }^{ \text{nor} } + \tau_{ \vert \gamma ( t ) }^{ \text{tan} },
\end{equation*}
then one can express the tension field of a $ ( k, r ) $-map in terms of the endomorphisms $ P_t $ as
\begin{equation*}\small
    \tau^{ \nor }_{ \vert \gamma ( t ) } = \left[ \ddot r ( t ) + \tfrac{ 1 }{ 2 } \dot{ r } ( t ) \, \tr P_t^{ - 1 } \, \dot{ P }_t - \tfrac{ 1 }{ 2 } \tr P_t^{ - 1 } \, ( \dot{ P } )_{ r ( t ) } \right] \dot{ \gamma } ( r ( t ) )
\end{equation*}
and
\begin{equation*}\small
    \tau^{ \tan }_{ \vert \gamma ( t ) } = \bigl( P_{ r ( t ) }^{ - 1 } \, \sum_{ \mu = 1 }^n \, [ E_{ \mu }, P_{ r ( t ) } E_{ \mu } ] \bigr)^{ \ast }_{ \vert \gamma ( r ( t ) ) },
\end{equation*}
where $ E_1, \ldots, E_n \in \mathfrak{ n } $ are such that $ E^{ \ast }_{ 1 \vert \gamma ( t ) }, \ldots, E^{ \ast }_{ n \vert \gamma ( t ) } $ form an orthonormal basis of $ { T_{ \gamma ( t ) } G \cdot \gamma ( t ) } $ and $ [ \cdot, \cdot ] $ represents the Lie bracket of $ \mathfrak{ g } $. See Theorems\,3.4 and~3.6 in~\cite{puttman2019harmonic}, respectively.

\subsection{Stability of harmonic maps}

We present here some notions about the stability and equivariant stability of harmonic maps. We follow the references~\cite{urakawa2013calculus} and~\cite{branding2023equivariant}.

Let $ \psi : ( M, g_1 ) \to ( N, g_2 ) $ be a harmonic map, the second variation of the energy is given by
\begin{equation*}\small
    \delta^2 E ( \psi ) ( V, W ) = \int_M g_2 ( J_{ \psi } ( V ), W ) \, dV_{g_1}, \quad V, W \in \Gamma ( \psi^* TN ) .
\end{equation*}
Here $ J_{ \psi } $ denotes the Jacobi operator, this is, the second order selfadjoint linear elliptic differential operator defined by
\begin{equation*}\small
    J_{ \psi } ( V ) := - \sum_{ i = 1 }^{ m } \left( \nabla_{ e_i } \nabla_{ e_i } - \nabla_{ \nabla_{ e_i } e_i } \right ) V - \sum_{ i = 1 }^m R^N ( V, d \psi ( e_i ) ) \, d \psi ( e_i )
\end{equation*}
for every $ V \in \Gamma ( \psi^* TN ) $, where $ R^N $ represents the Riemannian curvature tensor of $ N $ and $ \{ e_i \}_{ i = 1 }^{ m } $ is an orthonormal basis of $ TM $.

A harmonic map $ \psi $ is \emph{stable} if
\begin{equation*}\small
    \delta^2 E ( \psi ) ( V, V ) > 0
\end{equation*}
for all $ V \in \Gamma ( \psi^* TN ) $. If $ \delta^2 E ( \psi ) ( V, V ) \geq 0 $, then we say $ \psi $ is \emph{weakly stable}, otherwise we say the map is \emph{unstable}. Due to the general theory of linear elliptic operators on a compact Riemannian manifold, the spectrum of $ J_{ \psi } $ consists only of a discrete set of an infinite number of eigenvalues, denoted as
\begin{equation*}\small
    \lambda_0 ( \psi ) < \lambda_1 ( \psi ) < \ldots < \lambda_j ( \psi ) \to \infty.
\end{equation*}
The vector space
\begin{equation*}\small
    V_{ \lambda }( \psi ) : = \{ V \in \Gamma ( \psi^* TN ) : J_{ \psi } V = \lambda V \} \neq \{ 0 \},
\end{equation*}
is called \emph{eigenspace} with eigenvalue $ \lambda $. We define
\begin{equation*}\small
    \begin{split}
        \ind ( \psi ) &: = \sum_{ \lambda < 0 } \dim V_{ \lambda } ( \psi ), \\[1ex]
        \nullity ( \psi ) &: = \dim V_0 ( \psi ) .
    \end{split}
\end{equation*}
Thus, $ \psi $ is weakly stable if and only if $ \lambda_j ( \psi ) \geq 0 $ for every $ j \in \N $. We refer to~\cite{urakawa2013calculus}[Theorem~3.2] for a proof of the following theorem, which we found useful for our purposes.
\begin{theorem}\label{weaklystability}
    Let $ ( M, g_1 ) $, $ ( N, g_2 ) $ be two compact K\"ahler manifolds and $ \psi : M \to N $ a holomorphic harmonic map. Then the following holds:
    \begin{equation*}\small
        \int_M g_2 ( J_{ \psi } V, V ) \, dV_{g_1} = \frac{ 1 }{ 2 } \int_M g_2 ( D V, D V ) \, dV_{g_1} \geq 0, \quad V \in \Gamma ( \psi^* TN ),
    \end{equation*}
    where $ D V $ is an element of $ \Gamma ( \psi^* T N \otimes T^* M ) $ defined by
    \begin{equation*}\small
        D V ( X ) := \nabla_{ J X } V - J \nabla_X V, \quad X \in \mathfrak{ X } ( M ) .
    \end{equation*}
    In particular, $ \psi $ is weakly stable.
\end{theorem}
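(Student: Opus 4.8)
The plan is to run a Bochner-type integration-by-parts argument showing that, for a holomorphic $\psi$, the Jacobi quadratic form becomes a sum of squares after integrating over the closed manifold $M$. It is enough to establish the displayed identity, since then $\delta^2 E(\psi)(V,V) = \int_M g_2(J_\psi V, V)\,dV_{g_1} = \tfrac12\int_M g_2(DV, DV)\,dV_{g_1} \ge 0$. Throughout, $\{e_i\}_{i=1}^{2m}$ is a local orthonormal frame of $TM$ ($m = \dim_{\C} M$), $\nabla$ the connection on $\psi^*TN$, and $|\nabla V|^2 := \sum_i g_2(\nabla_{e_i}V, \nabla_{e_i}V)$.

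First I would rewrite the left-hand side in the standard way: integrating the rough-Laplacian part of $J_\psi$ by parts on the closed manifold $M$ gives
\[
    \int_M g_2(J_\psi V, V)\,dV_{g_1} = \int_M |\nabla V|^2 \, dV_{g_1} - \int_M \sum_{i} g_2\bigl(R^N(V, d\psi\, e_i)\, d\psi\, e_i,\, V\bigr)\,dV_{g_1}.
\]
Next I would expand the right-hand side using a $J$-adapted frame, $e_{m+a} = J e_a$ for $1 \le a \le m$, and the fact that $N$ is Kähler (so $J$ is orthogonal for $g_2$ and parallel along $\psi$, whence $J\nabla_X V = \nabla_X(JV)$ and $\{Je_i\}$ is again orthonormal): a one-line expansion of $DV(e_i) = \nabla_{Je_i}V - J\nabla_{e_i}V$ yields
\[
    g_2(DV, DV) = 2|\nabla V|^2 - 2\sum_{i} g_2\bigl(\nabla_{Je_i}V,\, J\nabla_{e_i}V\bigr).
\]
Comparing the two displays, the theorem reduces to showing that $\int_M \sum_i g_2(\nabla_{Je_i}V, J\nabla_{e_i}V)\,dV_{g_1}$ equals $\int_M \sum_i g_2(R^N(V, d\psi\, e_i)d\psi\, e_i, V)\,dV_{g_1}$.

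This identity is the heart of the matter, and it is where all three hypotheses enter. I would introduce the $1$-form $\sigma$ on $M$ given by $\sigma(X) := g_2(\nabla_{JX}V, JV)$, which is well defined because $X \mapsto \nabla_{JX}V$ is tensorial. Computing $\operatorname{div}\sigma^{\sharp} = \sum_i (\nabla^M_{e_i}\sigma)(e_i)$ and using that $J$ is $\nabla^M$-parallel (this is where $M$ Kähler is used, to rewrite $\nabla^M_{e_i}(Je_i) = J\nabla^M_{e_i}e_i$), the divergence theorem gives
\[
    \int_M \sum_i g_2(\nabla_{Je_i}V, J\nabla_{e_i}V)\,dV_{g_1} = - \int_M \sum_i g_2\bigl(\nabla_{e_i}\nabla_{Je_i}V - \nabla_{\nabla^M_{e_i}(Je_i)}V,\; JV\bigr)\,dV_{g_1}.
\]
The Ricci identity on $\psi^*TN$, torsion-freeness of $\nabla^M$, and the reindexing $Je_{m+a} = -e_a$ collapse the bracketed second-order expression to $-\sum_{a=1}^m R^N(d\psi(Je_a), d\psi\, e_a)V$, the $[e_i, Je_i]$- and $\nabla^M_{e_i}e_i$-contributions cancelling against each other. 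Holomorphicity converts this into $-\sum_a R^N(J\,d\psi\,e_a,\, d\psi\, e_a)V$, and the Kähler curvature identities on $N$ ($J$ commutes with $R^N$, and $R^N(JA,JB) = R^N(A,B)$) together with the first Bianchi identity and the pair symmetry of $R^N$ give the pointwise algebraic identity
\[
    g_2\bigl(R^N(JX, X)V,\, JV\bigr) = g_2\bigl(R^N(V, X)X,\, V\bigr) + g_2\bigl(R^N(V, JX)JX,\, V\bigr)
\]
for all $X$. Applying this with $X = d\psi\, e_a$ and summing over the adapted frame recovers exactly $\sum_{i} g_2(R^N(V, d\psi\, e_i)d\psi\, e_i, V)$, which closes the argument.

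I expect the main obstacle to be this middle step: tracking indices carefully enough to see that $\operatorname{div}\sigma^{\sharp}$ really produces a pure curvature term (the cancellation between the $[e_i, Je_i]$ and $\nabla^M_{e_i}e_i$ terms must be verified via torsion-freeness), and getting every sign right in the concluding curvature identity on $N$. Neither point is conceptually deep, but both are error-prone; with them in hand, non-negativity and weak stability follow formally.
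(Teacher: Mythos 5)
The paper does not prove this theorem at all: it is quoted verbatim from Urakawa's work (the text explicitly refers to \cite{urakawa1993equivariant}[Theorem~3.2] for the proof), so there is no internal argument to compare against. Your Bochner-type proof is correct and is essentially the classical Lichnerowicz computation underlying the cited result (usually phrased via the $(0,1)$-part $\nabla'' V$ of $\nabla V$, with $DV = -2J\,\nabla'' V$ so that $\tfrac12 g_2(DV,DV) = 2\,|\nabla'' V|^2$): the expansion $g_2(DV,DV) = 2|\nabla V|^2 - 2\sum_i g_2(\nabla_{Je_i}V, J\nabla_{e_i}V)$, the divergence/Ricci-identity step reducing everything to $\int_M \sum_i g_2(\nabla_{Je_i}V, J\nabla_{e_i}V)\,dV_{g_1} = \int_M \sum_i g_2(R^N(V,d\psi\,e_i)d\psi\,e_i, V)\,dV_{g_1}$, and the concluding Kähler curvature identity all check out.
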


\smallskip
Since in our setting we are working with equivariant $ ( k, r ) $-maps, it is natural to study also the \emph{equivariant stability} of the harmonic solutions, in the sense that one considers only variations that are invariant under the cohomogeneity one action. In general, the notion of stability is more restrictive than the notion of equivariant stability: there exist examples of harmonic maps which are unstable but equivariantly stable under some group action. The main source for equivariant stability of harmonic self-maps of cohomogeneity one manifolds is~\cite{branding2023equivariant}.

The spectral problem describing the equivariant stability of a harmonic $ ( k, r ) $-map can also be expressed in terms of the endomorphisms $ P_t $ as
\begin{equation*}\small
    \ddot{ \xi } ( t ) + \tfrac{ 1 }{ 2 } \tr( P_t^{ - 1 } \dot{ P }_t ) \, \dot{ \xi } ( t ) - \tfrac{ 1 }{ 2 } \tr ( P_t^{ - 1 } \ddot{ P }_{ r ( t ) } ) \, \xi ( t ) + \lambda \xi ( t ) = 0
\end{equation*}
where $ \xi \in C_0^{ \infty }( [ 0, \tfrac{ \pi }{ 2 } ] ) $. We will refer to the spectrum for the Sturm-Liouville problem above as \emph{equivariant spectrum}.

The so-called \emph{Jacobi polynomials} proved to be useful in the study of these differential equations. Recall that the Jacobi polynomials $ P^{ ( \alpha, \beta ) }_j ( t ) $, where $ j \in \N $ and $ \alpha, \beta > - 1 $, solve the following differential equation:
\begin{equation}\label{jacobipolynomial}
    ( 1 - t^2 ) \ddot{ \xi } ( t ) + [ \beta - \alpha - ( \alpha + \beta + 2 ) t ] \, \dot{ \xi } ( t ) + j ( j + 1 + \alpha + \beta ) \xi ( t ) = 0.
\end{equation}
See~\cite[\href{https://dlmf.nist.gov/18.3}{(18.3)}]{NIST:DLMF} for more details about the Jacobi and other orthogonal polynomials.

\section{\texorpdfstring{$ \SU ( p + 1 ) \times \SU ( n - p ) $}{SU(p+1) x SU(n-p)} - equivariant self-maps of \texorpdfstring{$ \CPn $}{CP}}\label{sectionconstruction}
Let $ n \in \N $ be given and fix $ p \in \N $ such that $ 0 \leq p < n $. Consider the compact Riemannian manifold $ ( \CPn, g_{ FS } ) $. The group $ { G = \SU ( p + 1 ) \times \SU ( n - p ) } $ is compact, and its natural action on the unit sphere of $ \C^{ n + 1 } $ is isometric. Thus, the action $ G \times \CPn \to \CPn $ given by
\begin{equation*}\small
    \left(
    \begin{pmatrix}
        A & 0 \\
        0 & B 
    \end{pmatrix}
    , [ Z ] \right) \to \left[
    \begin{pmatrix}
        A & 0 \\
        0 & B 
    \end{pmatrix}
    Z \right] ,
\end{equation*}
where $ Z $ is in homogeneous coordinates, $ A \in \SU ( p + 1 ) $, and $ B \in \SU ( n - p ) $, is also isometric. Uchida proved in~\cite{uchida1977classification} that this is a cohomogeneity one action with orbit space $ \CPn / G $ isometric to the closed interval $ { [ 0, \tfrac{ \pi }{ 2 } ] } $.

It is not hard to see that the geodesic $ \gamma $ defined by
\begin{equation*}\small
    \gamma ( t ) = [ \cos t \, e_1 +  \sin t \, e_{p+2} ]
\end{equation*}
for every $ t \in [ 0, \tfrac{ \pi }{ 2 } ]$, where $ \{ e_j \}_{ j = 1 }^{ n + 1 } $ is the standard basis of $ \R^{ n + 1 } $, is a normal geodesic, i.e., a unit-speed geodesic that passes through all orbits perpendicularly.

A direct computation shows that the isotropy groups are given by

\begin{equation*}\small
    K_0 = \left \{ \begin{pmatrix}
        \nu  & 0 & 0  \\
        0 & A & 0 \\
        0 & 0 & B
    \end{pmatrix} : A \in \U ( p ), \, B \in \SU ( n - p ), \, \nu^{ -1 } = \det A  \right \},
\end{equation*}
    
\begin{equation*}\small
    H = \left\{ \begin{pmatrix}
        \nu  & 0 & 0 & 0 \\
        0 & A & 0 & 0\\
        0 & 0 & \nu & 0 \\
        0 & 0 & 0 & B
    \end{pmatrix} : A \in \U ( p ), \, B \in \U ( n - p - 1 ), \, \nu^{ -1 }= \det A =  \det B   \right \},
\end{equation*}
    
\begin{equation*}\small
    K_1 = \left \{ \begin{pmatrix}
        A  & 0 & 0  \\
        0 & \nu & 0 \\
        0 & 0 & B
    \end{pmatrix} : A \in \SU ( p + 1 ), \, B \in \U ( n - p - 1 ), \, \nu^{ -1 } = \det B  \right\},
\end{equation*}
where we write $ H = G_{ \gamma ( t ) } $ for every $ t \in ( 0, \tfrac{ \pi }{ 2 } ) $, $ K_0 = G_{ \gamma ( 0 ) } $, and $ K_1 = G_{ \gamma( \frac{ \pi }{ 2 } ) }$.

Recall that the Weyl group $ W $ is the dihedral subgroup of $ N ( H ) / H $ generated by the two unique involutions
\begin{equation*}\small
    \sigma_{j} \in \frac{ N ( H ) \cap K_j }{ H },
\end{equation*}
$ j =  0, 1 $, and the non-principal isotropy groups along $ \gamma ( \R ) $ are conjugate to one of the $ K_j $ via an element of $ W $.

In the present setting, the Weyl group is a dihedral group of order $ | W | = 2 $ generated by the two involutions
\begin{equation*}\small
    \sigma_0 = \begin{pmatrix}
        \mathds{ 1 }_{ p + 1 } & 0 & 0 & 0 \\
        0 & -1 & 0 & 0\\
        0 & 0 & -1 & 0 \\
        0 & 0 & 0 & \mathds{ 1 }_{ n - p - 2 }
    \end{pmatrix}, \quad 
    \sigma_1 = \begin{pmatrix}
        -1 & 0 & 0 & 0 \\
        0 & -1 & 0 & 0\\
        0 & 0 & \mathds{ 1 }_{ p - 1 } & 0 \\
        0 & 0 & 0 & \mathds{ 1 }_{ n - p }
    \end{pmatrix}.
\end{equation*}
The extended group diagram is of the form presented in Figure~\ref{figureextendedgroupdiagram}.
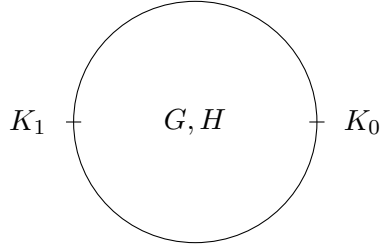
\begin{figure}[ht]
\centering
    \begin{tikzpicture}
        \node at (0,0) {$ G, H $};
        \draw (0,0) circle [radius=1.6cm]; 
        \draw(0:1.5cm)--(0:1.7cm);
        \node at (0:2.2cm) {$ K_0 $};
        \draw(180:1.5cm)--(180:1.7cm);
        \node at (180:2.2cm) {$ K_1 $};
    \end{tikzpicture}
\caption{Extended group diagram for the action of \texorpdfstring{$ \SU ( p + 1 ) \times \SU ( n - p ) $}{SU(p+1) x SU(n-p)} on \texorpdfstring{$ \CPn $}{CP}}
\label{figureextendedgroupdiagram}
\end{figure}

We consider equivariant self-maps of $ \CPn $ under the action of $ G = \SU ( p + 1 ) \times \SU ( n - p ) $ of the form
\begin{equation*}\small
    g \cdot \gamma ( t ) \mapsto g \cdot \gamma ( r ( t ) )
\end{equation*}
where $ g \in G $. Here $ r : [ 0, \tfrac{ \pi }{ 2 } ] \to \R $ is a smooth function satisfying $ r ( 0 ) = 0 $ and $r ( \tfrac{ \pi }{ 2 } ) = k \tfrac{ \pi }{ 2 } $. In this case $ k $ is of the form $ j + 1 $ with $ j \in 2\Z $, since $ G_{ \gamma ( \frac{ \pi }{ 2 } ) } = K_1 $ is not a subgroup of $ G_{ \gamma ( \pi ) } = K_0 $, see~\cite{puttmann2009cohomogeneity}[Lemma 2.1]. 

The codimensions of the singular orbits are both even: $ 2 ( n - p ) $ and $ 2 ( p + 1 ) $, so the Brouwer degree of such maps is $ + 1 $, see~\cite{puttmann2009cohomogeneity}[Theorem 3.4].

\section{Derivation of the tension field}\label{sectionderivationtensionfield}

We write $ \mathfrak{ g }, \mathfrak{ h } $ for the Lie algebra of $ G $ and $ H $, respectively. We choose on $ \mathfrak{ g } $ the inner product $ Q $ defined by
\begin{equation*}\small
    Q ( X, Y ) = - \frac{ 1 }{ 2 } \tr X Y
\end{equation*}
for every $ X, Y \in \mathfrak{ g } $. We denote the orthogonal complement of $ \mathfrak{ h } $ under $ Q $ by $ \mathfrak{ n } $. Define for every $ t \notin \tfrac{ \pi }{ 2 } \Z $ the endomorphisms $ P_t : \mathfrak{ n } \to \mathfrak{ n } $ by
\begin{equation*}\small
    Q ( P_t X, Y ) = g_{ FS } ( X^{ \ast }, Y^{ \ast } )_{ \vert \gamma ( t ) }
\end{equation*}
for every $ X, Y \in \mathfrak{ n } $ where $ X^{ \ast }, Y^{ \ast } $ are their corresponding action fields given by
\begin{equation*}\small
    X^{ \ast }_{ \vert \gamma ( t ) } = \frac{ d }{ ds } \Big\vert_{ s = 0 } \exp ( s X ) \cdot \gamma ( t ) .
\end{equation*}

\subsection{Computation of \texorpdfstring{$ P_t $}{Pt}}

We start by explicitly computing the endomorphism $ P_t $. For that, note that in any basis $ \{ X_1, \ldots, X_{ 2 n - 1 } \}$ of $ ( \mathfrak{ n }, Q ) $ we can compute the action of $ P_t $ on any $ X_j $ as
\begin{equation}\label{formulaPt}\small
    P_t X_j = \sum_{ k = 1 }^{ 2 n - 1 } Q ( P_t X_j, X_k ) X_k = \sum_{ k = 1 }^{ 2 n - 1 } g_{ FS }( X_j^{ \ast }, X_k^{ \ast } )_{ \vert \gamma ( t ) } X_k.
\end{equation}

Throughout this section, $ C_{ j, k } $ represents the $ ( n + 1 ) \times ( n + 1 ) $-matrix with $ 1 $ in the $ ( j, k ) $-entry and $ 0 $ in the others. In addition, we define
\begin{equation*}\small
    E_{ j, k } = C_{ j, k } - C_{ k, j }, \quad F_{ j, k } = i C_{ j, k } + i C_{ k, j },
\end{equation*}
and the diagonal matrix
\begin{equation*}\small
    D = \omega i \, \diag \Big[ p ( n - p ), \, \underbrace{ p - n, \ldots, p - n }_{ \text{$ p $ times} }, \, - ( p + 1 ) ( n - p - 1 ), \, \underbrace{ p + 1, \ldots, p + 1 }_{ \text{$ n - p - 1 $ times} } \Big]
\end{equation*}
where
\begin{equation*}\small
    \omega^2 =  2 ( p + 1 )^{ - 1 } ( n - p )^{ - 1 } ( ( p + 1 ) ( n - p - 1 ) + ( n - p ) p )^{ - 1 } .
\end{equation*}
Consider also the sets
\begin{align*}
\small
    N_1 &= \{ E_{ 1, j } : 2 \leq j \leq p + 1 \} , \,
    &&N_2 = \{ F_{ 1, j } : 2 \leq j \leq p + 1 \} , \\ 
    N_3 &= \{ E_{ p+2, j } : p + 3 \leq j \leq n + 1 \} , \,
    &&N_4 = \{ F_{ p+2, j } : p + 3 \leq j \leq n + 1 \}.
\end{align*}
Then
\begin{equation}\small\label{basis}
    \Lambda = N_1 \cup N_2 \cup N_3 \cup N_4 \cup D
\end{equation}
is an orthonormal basis of $ ( \mathfrak{ n }, Q ) $. We denote by $ \{ e_j \}_{ j = 1 }^{ n + 1 } $ the standard basis of $ \R^{ n + 1 } $ and by $ \{ \xi_j \}_{ j = 1 }^n$ the standard basis of $ \R^n $, we also write $ z = x + iy $ and
\begin{equation*}\small
        \tfrac{ \partial }{ \partial z_j } = \tfrac{ 1 }{ 2 } \left( \tfrac{ \partial }{ \partial x_j } - i \tfrac{ \partial }{ \partial y_j } \right), \quad \tfrac{ \partial }{ \partial \bar{ z }_j } = \tfrac{ 1 }{ 2 } \left( \tfrac{ \partial }{ \partial x_j } + i \tfrac{ \partial }{ \partial y_j } \right).
\end{equation*}

\begin{lemma}\label{actionfieldsofbasis}
For any $ t \in ( 0, \tfrac{ \pi }{ 2 } ) $ and every element of $ \Lambda $ we have that its corresponding action field is given by
\begin{equation*}\small
    \begin{split}
        E_{ j, k \vert \gamma ( t ) }^{ \ast } =& \begin{cases}
            - \tfrac{ \partial }{ \partial z_{ k - 1 } } - \tfrac{ \partial }{ \partial \bar{ z }_{ k - 1 } } & \text{if $ j = 1$, $k=2, \ldots, p + 1 $},\\
            - \tan t \, \left( \tfrac{ \partial }{ \partial z_{ k - 1 } } + \tfrac{ \partial }{ \partial \bar{ z }_{ k - 1 } } \right) & \text{if $ j = p + 2$, $k=p+3, \ldots, n $,}
        \end{cases} \\[2ex]
        F_{ j, k \vert \gamma ( t ) }^{ \ast } =& \begin{cases}
            i \tfrac{ \partial }{ \partial z_{ k - 1 } } - i \tfrac{ \partial }{ \partial \bar{ z }_{ k - 1 } } & \text{if $ j = 1$, $k=2, \ldots, p + 1 $},\\
            \tan t \, \left( i \tfrac{ \partial }{ \partial z_{ k - 1 } } - i \tfrac{ \partial }{ \partial \bar{ z }_{ k - 1 } } \right) \,  & \text{if $ j = p + 2$, $k=p+3, \ldots, n $,}
        \end{cases}
        \\[2ex]
         \hspace{3cm} D_{ \vert \gamma ( t ) }^{ \ast } =& - \eta \tan t \, \left( i \tfrac{ \partial }{ \partial z_{ p + 1 } } - i \tfrac{ \partial }{ \partial \bar{ z }_{ p + 1 } } \right),
    \end{split}
\end{equation*}
where 
\begin{equation*}\small
    \eta^2 = 2 \frac{ n - p - 1 }{ n - p } + 2 \frac{ p }{ p + 1 }.
\end{equation*}
\end{lemma}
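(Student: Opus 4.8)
The plan is to compute every action field in $ \Lambda $ directly from the definition, working in the affine chart $ U_0 = \{ Z_0 \neq 0 \} $. First I would record that for $ t \in ( 0, \tfrac{ \pi }{ 2 } ) $ the point $ \gamma ( t ) = [ \cos t \, e_1 + \sin t \, e_{ p + 2 } ] $ lies in $ U_0 $, with affine coordinates $ z_{ p + 1 } = \tan t $ and $ z_j = 0 $ for $ j \neq p + 1 $ (recall that $ e_j $ corresponds to the homogeneous coordinate $ Z_{ j - 1 } $, so $ e_{ p + 2 } $ corresponds to $ Z_{ p + 1 } $). Since $ X^{ \ast }_{ \vert \gamma ( t ) } = \frac{ d }{ ds } \big\vert_{ s = 0 } \bigl[ \exp ( s X ) ( \cos t \, e_1 + \sin t \, e_{ p + 2 } ) \bigr] $, it then suffices, for each $ X \in \Lambda $, to (i) write down the one-parameter subgroup $ \exp ( s X ) $, (ii) apply it to the vector $ \cos t \, e_1 + \sin t \, e_{ p + 2 } $, (iii) normalize by the $ e_1 $-component to read off the curve $ s \mapsto w ( s ) $ of affine coordinates, and (iv) differentiate at $ s = 0 $, using that a curve in $ U_0 $ with $ \dot w_j ( 0 ) = a_j \in \C $ has velocity $ \sum_j \bigl( a_j \tfrac{ \partial }{ \partial z_j } + \overline{ a_j } \, \tfrac{ \partial }{ \partial \bar z_j } \bigr) $ at $ \gamma ( t ) $.

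For the off-diagonal generators this is immediate. Each $ E_{ j, k } = C_{ j, k } - C_{ k, j } $ acts as the infinitesimal rotation $ e_j \mapsto - e_k $, $ e_k \mapsto e_j $ on $ \mathrm{span}\{ e_j, e_k \} $ and trivially elsewhere, so $ \exp ( s E_{ j, k } ) $ is the corresponding planar rotation; likewise $ F_{ j, k } = i ( C_{ j, k } + C_{ k, j } ) $ restricts to a matrix squaring to $ - \mathds{ 1 } $ on $ \mathrm{span}\{ e_j, e_k \} $, so $ \exp ( s F_{ j, k } ) $ sends $ e_j \mapsto \cos s \, e_j + i \sin s \, e_k $. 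For the generators in $ N_1 \cup N_2 $ the index $ k $ satisfies $ k \leq p + 1 < p + 2 $, hence $ e_{ p + 2 } $ is untouched; after dividing by the $ e_1 $-component one gets $ w_{ k - 1 } ( s ) = - \tan s $ for $ E_{ 1, k } $ and $ w_{ k - 1 } ( s ) = i \tan s $ for $ F_{ 1, k } $ (the remaining coordinates having vanishing derivative at $ s = 0 $), which differentiate to the stated velocities $ - \tfrac{ \partial }{ \partial z_{ k - 1 } } - \tfrac{ \partial }{ \partial \bar z_{ k - 1 } } $ and $ i \tfrac{ \partial }{ \partial z_{ k - 1 } } - i \tfrac{ \partial }{ \partial \bar z_{ k - 1 } } $. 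For the generators in $ N_3 \cup N_4 $ the roles of $ e_1 $ and $ e_{ p + 2 } $ are exchanged: now $ e_1 $ stays fixed and the moving $ e_k $-component is weighted by $ \sin t $, so normalizing by the constant $ e_1 $-component $ \cos t $ produces exactly the extra factor $ \tan t $ appearing in the statement.

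For $ D $: being diagonal with purely imaginary entries, $ \exp ( s D ) $ is diagonal with entries $ e^{ s d_j } $, so $ \cos t \, e_1 + \sin t \, e_{ p + 2 } $ is mapped to $ e^{ s d_1 } \cos t \, e_1 + e^{ s d_{ p + 2 } } \sin t \, e_{ p + 2 } $, whose only nonzero affine coordinate is $ w_{ p + 1 } ( s ) = e^{ s ( d_{ p + 2 } - d_1 ) } \tan t $. With $ d_1 = \lambda i \, p ( n - p ) $ and $ d_{ p + 2 } = - \lambda i \, ( p + 1 ) ( n - p - 1 ) $ one has $ d_{ p + 2 } - d_1 = - \lambda i \bigl[ ( p + 1 ) ( n - p - 1 ) + p ( n - p ) \bigr] $, so differentiating at $ s = 0 $ gives $ D^{ \ast }_{ \vert \gamma ( t ) } = - \lambda \bigl[ ( p + 1 ) ( n - p - 1 ) + p ( n - p ) \bigr] \tan t \, \bigl( i \tfrac{ \partial }{ \partial z_{ p + 1 } } - i \tfrac{ \partial }{ \partial \bar z_{ p + 1 } } \bigr) $. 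The final point is to check that this prefactor equals $ \eta $: squaring it and inserting the given value of $ \lambda^2 $, both $ \eta^2 $ and $ \lambda^2 \bigl[ ( p + 1 ) ( n - p - 1 ) + p ( n - p ) \bigr]^2 $ simplify to $ \tfrac{ 2 ( ( p + 1 ) ( n - p - 1 ) + p ( n - p ) ) }{ ( p + 1 ) ( n - p ) } $, which completes the lemma. I do not expect a genuine obstacle here: the whole argument is a bookkeeping computation, and the only two points demanding care are the index shift $ e_j \leftrightarrow z_{ j - 1 } $ between the standard basis of $ \C^{ n + 1 } $ and the affine coordinates, and this last algebraic identity for the $ D $-component.
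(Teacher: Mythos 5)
Your proposal is correct and follows essentially the same route as the paper: write down the one-parameter subgroups generated by each element of $\Lambda$, apply them to $\gamma(t)$, pass to affine coordinates in $U_0$, and differentiate at $s=0$. Your treatment of $D$ is in fact slightly more careful than the paper's (which writes the affine curve as $\tan t\, e^{-s\eta}$ where the exponent should be purely imaginary, $e^{-is\eta}$, to land in the $i\tfrac{\partial}{\partial z_{p+1}}-i\tfrac{\partial}{\partial \bar z_{p+1}}$ direction), and you correctly verify the identity $\eta=\lambda\bigl[(p+1)(n-p-1)+p(n-p)\bigr]$ that the paper leaves implicit.
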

\begin{proof}
    For $ E_{ j, k }, F_{ j, k }, D \in \Lambda $, the smooth homomorphisms defined by
    \begin{equation*}\small
    \begin{split}
        \exp s E_{ j, k } &= \sin s \, E_{ j, k } + \cos s \, ( C_{ j, j } + C_{ k, k } ) + \sum_{ \ell \neq j, k } C_{ \ell, \ell }, \\
        \exp s F_{ j, k } &= \sin s \, F_{ j, k } + \cos s \, ( C_{ j, j } + C_{ k, k } ) + \sum_{ \ell \neq j, k } C_{ \ell, \ell }, \\
        \exp s D &= \sum_{ \ell } e^{ s [ D ]_{ \ell, \ell } } C_{ \ell, \ell },
    \end{split}
    \end{equation*}
    for every $ s \in \R $ are one-parameter subgroups of $ G $ generated by the elements of $ \Lambda $. Here, $ [ D ]_{ \ell , \ell } $ denotes the $ ( \ell , \ell ) $-entry of the matrix $ D $. The actions of these one-parameter subgroups at the point $ \gamma ( t ) $ yield curves defined by
    \begin{equation*}\small
        \begin{split}
            \exp s E_{ j, k } \cdot \gamma ( t ) = \, &[ \cos s \, \cos t \, e_1 + \sin t \, e_{ p + 2 } - \, \sin s \, \cos t \, e_k ] \, \delta_{ 1, j } \\
            + \, &[ \cos s \, \sin t \, e_{ p + 2 } + \cos t \, e_{ 1 } - \sin s \, \sin t \, e_k ] \, \delta_{ p + 2, j } \\[1ex]
            \exp s F_{ j, k } \cdot \gamma ( t ) = \, &[ \, \cos s \, \cos t \, e_1 + \sin t \, e_{ p + 2 }+ i \sin s \, \cos t \, e_k ] \, \delta_{ 1, j } \\
            + \, &[ \cos s \, \sin t \, e_{ p + 2 } + \cos t \, e_{ 1 } + i \sin s \, \sin t \, e_k ] \, \delta_{ p + 2, j } \\[1ex]
             \exp s D \cdot \gamma ( t ) = \, &[ \cos t \, e^{ s [ D ]_{ 1, 1 } } \, e_1 + \sin t \, e^{ s [ D ]_{ p + 2, p + 2 } } \, e_{ p + 2 } ] .
        \end{split} 
    \end{equation*}
    Since we are interested in taking the derivative of these curves at $ s = 0 $, we can assume that $ s \in ( - \varepsilon , \varepsilon ) $ with $ \varepsilon $ small enough so that $ \cos s  $ does not vanish there. Moreover, $ t \in ( - \frac{ \pi }{ 2 } , \frac{ \pi }{ 2 } ) $, so there is no restriction on using affine coordinates here, obtaining
    \begin{equation*}\small
        \begin{split}
             \exp s E_{ j, k } \cdot \gamma ( t ) &= ( - \tan s \, \xi_{ k - 1 } + \sec s \, \tan t \, \xi_{ p + 1 } ) \, \delta_{ 1, j } \\
            &+ ( - \sin s \, \tan t \, \xi_{ k - 1 } + \cos s \, \tan t \, \xi_{ p + 1 } ) \, \delta_{ p + 2, j } \\[1ex]
            \exp s F_{ j, k } \cdot \gamma ( t ) &= ( i \tan s \, \xi_{ k - 1 } + \sec s \, \tan t \, \xi_{ p + 1 } ) \, \delta_{ 1, j } \\
            &+ ( i \sin s \, \tan t \, \xi_{ k - 1 } + \cos s \, \tan t \, \xi_{ p + 1 } ) \, \delta_{ p + 2, j }, \\[1ex]
            \exp s D \cdot \gamma ( t ) &= \tan t \, e^{ - s \eta } \, \xi_{ p + 1 }.
        \end{split} 
    \end{equation*}
    Taking the derivative at $ s = 0 $, we obtain the following tangent vectors to $ \CPn $ at $ \gamma ( t ) $:
    \begin{equation*}\small
        \begin{split}
            E_{ j, k \vert \gamma ( t ) }^{ \ast } &= - \delta_{ 1, j } \, \tfrac{ \partial }{ \partial x_{ k - 1 } } - \delta_{ p + 2, j } \, \tan t \, \tfrac{ \partial }{ \partial x_{ k - 1 } }, \\[1ex]
            F_{ j, k \vert \gamma ( t ) }^{ \ast } &= \delta_{ 1, j } \, \tfrac{ \partial }{ \partial y_{ k - 1 } } + \delta_{ p + 2, j } \, \tan t \,  \tfrac{ \partial }{ \partial y_{ k - 1 } }, \\[1ex]
            D_{ \vert \gamma ( t ) }^{ \ast } &= - \eta \tan t \, \tfrac{ \partial }{ \partial y_{ p + 1 } }.
        \end{split}
    \end{equation*}
    The result follows after making the substitutions
    \begin{equation*}\small
        \tfrac{ \partial }{ \partial x_{ k } } = \tfrac{ \partial }{ \partial z_{ k } } + \tfrac{ \partial }{ \partial \bar{ z }_{ k } }, \quad \tfrac{ \partial }{ \partial y_{ k } } = i \tfrac{ \partial }{ \partial z_{ k } } - i \tfrac{ \partial }{ \partial \bar{ z }_{ k } } .
    \end{equation*}
\end{proof}

Following~\eqref{FSmetric}, the Fubini-Study metric at a point $ \gamma ( t ) $ for $ t \neq \tfrac{ \pi }{ 2 } $ can be expressed, in affine coordinates, as
\begin{equation}\label{FSmetricatgeodesic}\small
   ( g_{ FS } )_{ j, k } = \frac{ \cos^2 t }{ 2 } \, 
    \begin{pmatrix}
        \begin{array}{c|c}
            0 & \begin{matrix}
                \mathds{ 1 }_{ p } & {} & {} \\
                {} & \cos^2 t & {} \\
                {} & {} & \mathds{ 1 }_{ n - p - 1 }
            \end{matrix} \\
        \hline
            \begin{matrix}
                \mathds{ 1 }_{ p } & {} & {} \\
                {} & \cos^2 t & {} \\
                {} & {} & \mathds{ 1 }_{ n - p - 1 }
            \end{matrix} & 0
        \end{array}
    \end{pmatrix}
\end{equation}
with respect to the basis $ \{ \tfrac{ \partial }{ \partial z_1 }, \ldots, \tfrac{ \partial }{ \partial z_n }, \tfrac{ \partial }{ \partial \bar{ z }_1 }, \ldots, \tfrac{ \partial }{ \partial \bar{ z }_n } \} $. The next result then follows from a straightforward computation.

\begin{proposition}\label{Pt}
    For $ t \in ( 0, \tfrac{ \pi }{ 2 } ) $, the endomorphism $ P_t : \mathfrak{ n } \to \mathfrak{ n } $ is given by
    \begin{equation*}\small
        P_t = \begin{pmatrix}
            \cos^2 t \, \mathds{ 1 }_{ 2 p } & {} & {} \\
            {} & \sin^2 t \, \mathds{ 1 }_{ 2 ( n - p - 1 ) } & {} \\
            {} &  {} & \tfrac{ \eta^2 }{ 4 } \sin^2 2 t
        \end{pmatrix}
    \end{equation*}
with respect to the basis $ \Lambda $ defined in~\eqref{basis}.
\end{proposition}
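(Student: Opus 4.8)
The plan is to read off $P_t$ entrywise from formula~\eqref{formulaPt}. Since $\Lambda$ is $Q$-orthonormal, for any $X_j, X_k \in \Lambda$ we have $Q(P_t X_j, X_k) = g_{FS}(X_j^\ast, X_k^\ast)_{\vert \gamma(t)}$, so the matrix of $P_t$ in the basis $\Lambda$ is exactly the Gram matrix of the action fields listed in Lemma~\ref{actionfieldsofbasis} with respect to the Fubini–Study metric at $\gamma(t)$. Thus everything reduces to substituting the expressions of Lemma~\ref{actionfieldsofbasis} into the coordinate form~\eqref{FSmetricatgeodesic} of the metric and collecting terms.

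First I would record the shape of the metric at $\gamma(t)$: in the basis $\{\partial/\partial z_j, \partial/\partial \bar z_j\}$ it is block anti-diagonal, so $g_{FS}(\partial/\partial z_j, \partial/\partial z_k) = g_{FS}(\partial/\partial \bar z_j, \partial/\partial \bar z_k) = 0$ while $g_{FS}(\partial/\partial z_j, \partial/\partial \bar z_k) = \tfrac{\cos^2 t}{2}\, c_j\, \delta_{jk}$ with $c_j = 1$ for $j \neq p+1$ and $c_{p+1} = \cos^2 t$. Every action field in Lemma~\ref{actionfieldsofbasis} is supported on a single coordinate slot: the fields of $N_1 \cup N_2$ live in the slot $z_{k-1}$ with $1 \leq k-1 \leq p$, the fields of $N_3 \cup N_4$ in the slot $z_{j-1}$ with $p+2 \leq j-1 \leq n$, and $D^\ast$ in the slot $z_{p+1}$; these slots are pairwise disjoint, hence action fields attached to different slots are automatically $g_{FS}$-orthogonal. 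Within a single slot the only pairing left to check is $E^\ast$ against $F^\ast$, and expanding $g_{FS}\bigl(\mp(\partial/\partial z + \partial/\partial \bar z),\ \pm i(\partial/\partial z - \partial/\partial \bar z)\bigr)$ the two surviving cross terms cancel because of the opposite signs of $i$. Therefore the Gram matrix, and with it $P_t$, is diagonal in $\Lambda$.

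It then remains to evaluate the three kinds of diagonal entries. For $E_{1,k}^\ast$ (and likewise $F_{1,k}^\ast$) with $2 \leq k \leq p+1$ one gets $g_{FS}(E_{1,k}^\ast, E_{1,k}^\ast) = 2\, g_{FS}(\partial/\partial z_{k-1}, \partial/\partial \bar z_{k-1}) = \cos^2 t$, which accounts for the $\cos^2 t\,\mathds{1}_{2p}$ block. For $E_{p+2,j}^\ast$ (and $F_{p+2,j}^\ast$) in $N_3 \cup N_4$ the extra factor $\tan t$ in the action field gives $g_{FS}(E_{p+2,j}^\ast, E_{p+2,j}^\ast) = \tan^2 t \cdot 2\, g_{FS}(\partial/\partial z_{j-1}, \partial/\partial \bar z_{j-1}) = \tan^2 t\,\cos^2 t = \sin^2 t$, producing the $\sin^2 t\,\mathds{1}_{2(n-p-1)}$ block. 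Finally $D^\ast = -\eta \tan t\,(i\partial/\partial z_{p+1} - i\partial/\partial \bar z_{p+1})$ lies in the distinguished slot $z_{p+1}$, where the metric carries the extra $\cos^2 t$, so $g_{FS}(D^\ast, D^\ast) = \eta^2 \tan^2 t \cdot 2\, g_{FS}(\partial/\partial z_{p+1}, \partial/\partial \bar z_{p+1}) = \eta^2 \tan^2 t\,\cos^4 t = \eta^2 \sin^2 t\,\cos^2 t = \tfrac{\eta^2}{4}\sin^2 2t$, the last diagonal entry.

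There is no genuine obstacle here; the computation is mechanical. The only points requiring care are bookkeeping ones: each unordered pair $\{\partial/\partial z, \partial/\partial \bar z\}$ contributes twice when one expands the symmetric bilinear form (the source of the factor $2$ above), and one must keep track of the fact that $z_{p+1}$ is the one slot in which~\eqref{FSmetricatgeodesic} carries an additional $\cos^2 t$, which is precisely what upgrades the $D^\ast$-entry from $\sin^2 t$ to $\tfrac{\eta^2}{4}\sin^2 2t$.
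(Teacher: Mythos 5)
Your proposal is correct and is exactly the ``straightforward computation'' the paper alludes to: read off the matrix of $P_t$ as the Gram matrix of the action fields from Lemma~\ref{actionfieldsofbasis} with respect to the metric~\eqref{FSmetricatgeodesic}, note that distinct coordinate slots give orthogonality and the $E^\ast$--$F^\ast$ cross terms cancel, and evaluate the three kinds of diagonal entries (including the extra $\cos^2 t$ in the $z_{p+1}$ slot that produces $\tfrac{\eta^2}{4}\sin^2 2t$). No gaps.
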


\subsection{Tangential component of the tension field}
One can express the tangential component of the tension field of a $ ( k, r ) $-map in terms of the endomorphisms $ P_t $, as the following theorem shows.
\begin{theorem}[see~\cite{puttman2019harmonic}]\label{tangentialpart}
The tangential component of the tension field is given by
\begin{equation*}\small
    \tau^{ \tan }_{ \vert \gamma ( t ) } = \bigl( P_{ r ( t ) }^{ - 1 } \sum_{ \mu = 1 }^n [ E_{ \mu }, P_{ r ( t ) } E_{ \mu } ] \bigr)^{ \ast }_{ \vert \gamma ( r ( t ) ) }
\end{equation*}
where $ E_1, \ldots, E_n \in \mathfrak{ n } $ are such that $ E^{ \ast }_{ 1 \vert \gamma ( t ) }, \ldots, E^{ \ast }_{ n \vert \gamma ( t ) } $ form an orthonormal basis of $ { T_{ \gamma ( t ) } G \cdot \gamma ( t ) } $.
\end{theorem}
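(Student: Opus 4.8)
The plan is to compute the tension field $\tau(\psi)=\sum_i(\nabla d\psi)(e_i,e_i)$ directly along the normal geodesic $\gamma$, using an orthonormal frame adapted to the cohomogeneity one structure, and then to extract the component tangent to the principal orbit. The one fact about $\psi$ that drives everything is its equivariance: from $\psi(\exp(sX)\cdot q)=\exp(sX)\cdot\psi(q)$ for every $X\in\mathfrak{g}$ and $q\in\CPn$, differentiating at $s=0$ gives $d\psi_q(X^*_q)=X^*_{\psi(q)}$ for all $q$. Along $\gamma$ this means that $d\psi$ carries the action field of $X\in\mathfrak{n}$ at $\gamma(t)$ to the action field of the \emph{same} $X$ at $\gamma(r(t))$, while $d\psi(\dot\gamma(t))=\dot r(t)\,\dot\gamma(r(t))$.

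First I would fix $t\in(0,\tfrac{\pi}{2})$ and pick $E_\mu\in\mathfrak{n}$ so that the $E^*_{\mu\vert\gamma(t)}$ form an orthonormal basis of $T_{\gamma(t)}(G\cdot\gamma(t))$, i.e.\ $Q(P_tE_\mu,E_\nu)=\delta_{\mu\nu}$; together with $\dot\gamma(t)$ these give an orthonormal frame of $T_{\gamma(t)}\CPn$. Extending each $E_\mu$ to the global action field $E^*_\mu$ and using tensoriality of $\nabla d\psi$, one has $(\nabla d\psi)(E^*_\mu,E^*_\mu)_{\vert\gamma(t)}=\nabla^\psi_{E^*_\mu}(d\psi\, E^*_\mu)-d\psi(\nabla^M_{E^*_\mu}E^*_\mu)$; since $d\psi\, E^*_\mu=E^*_\mu\circ\psi$ by equivariance, the first term equals $\nabla^N_{E^*_\mu}E^*_\mu$ evaluated at $\gamma(r(t))$. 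A short computation using that $\gamma$ is a geodesic in both source and target gives $(\nabla d\psi)(\dot\gamma,\dot\gamma)_{\vert\gamma(t)}=\ddot r(t)\,\dot\gamma(r(t))$, which is normal to the orbit through $\gamma(r(t))$; likewise the radial parts of $\nabla^M_{E^*_\mu}E^*_\mu$ and $\nabla^N_{E^*_\mu}E^*_\mu$ are mapped by $d\psi$ into multiples of $\dot\gamma(r(t))$. All of these enter only $\tau^{\nor}$, so
\[
\tau^{\tan}_{\vert\gamma(t)}=\sum_\mu\Bigl[(\nabla^N_{E^*_\mu}E^*_\mu)^{\tan}_{\vert\gamma(r(t))}-d\psi\bigl((\nabla^M_{E^*_\mu}E^*_\mu)^{\tan}_{\vert\gamma(t)}\bigr)\Bigr],
\]
where the tangential parts are the intrinsic Levi-Civita connections of the two principal orbits.

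Each principal orbit is $G/H$ with the invariant metric induced by some $P_s$ (and $\mathfrak{g}=\mathfrak{h}\oplus\mathfrak{n}$ is reductive, since $\mathfrak{n}=\mathfrak{h}^{\perp}$ for the biinvariant $Q$), so I would plug in the standard formula for the Levi-Civita connection of an invariant metric on a reductive homogeneous space: for $X,Y\in\mathfrak{n}$ the orbit connection is $\bigl(-\tfrac12[X,Y]_{\mathfrak{n}}+U_s(X,Y)\bigr)^*$, with $U_s$ the symmetric bilinear map determined by $2g_s(U_s(X,Y),Z)=g_s([Z,X]_{\mathfrak{n}},Y)+g_s([Z,Y]_{\mathfrak{n}},X)$ and $g_s(V^*,W^*)=Q(P_sV,W)$. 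Since $[E_\mu,E_\mu]=0$, the antisymmetric terms vanish in the sum, and applying $d\psi$ to action fields coming from $\mathfrak{n}$ leaves $\tau^{\tan}_{\vert\gamma(t)}=\bigl(\sum_\mu U_{r(t)}(E_\mu,E_\mu)-\sum_\mu U_t(E_\mu,E_\mu)\bigr)^*_{\vert\gamma(r(t))}$; the essential subtlety is that $\{E_\mu\}$ is orthonormal for $g_t$ but \emph{not} for $g_{r(t)}$. Two Casimir-type identities then finish the argument. Expanding $E_\mu=\sum_i A_{\mu i}X_i$ in a $Q$-orthonormal basis $\{X_i\}$ of $\mathfrak{n}$, the condition $Q(P_tE_\mu,E_\nu)=\delta_{\mu\nu}$ forces the matrix product $A^{\top}\!A$ to equal the inverse of the matrix of $P_t$, whence $\sum_\mu[E_\mu,P_tE_\mu]=\sum_i[X_i,X_i]=0$; combined with the $\mathrm{ad}$-invariance of $Q$ this gives $\sum_\mu U_t(E_\mu,E_\mu)=P_t^{-1}\sum_\mu[E_\mu,P_tE_\mu]_{\mathfrak{n}}=0$, so the source term disappears altogether. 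Running the same $\mathrm{ad}$-invariance manipulation with $P_{r(t)}$ in place of $P_t$ yields $g_{r(t)}\bigl(\sum_\mu U_{r(t)}(E_\mu,E_\mu),Z\bigr)=Q\bigl(\sum_\mu[E_\mu,P_{r(t)}E_\mu],Z\bigr)$ for all $Z\in\mathfrak{n}$, i.e.\ $\sum_\mu U_{r(t)}(E_\mu,E_\mu)=P_{r(t)}^{-1}\sum_\mu[E_\mu,P_{r(t)}E_\mu]_{\mathfrak{n}}$; since the $\mathfrak{h}$-component of the bracket acts trivially at $\gamma(r(t))$ this is exactly the asserted formula.

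The hard part is not a single idea but the bookkeeping: at every step one must track whether an orthonormality condition, or one of the tensors $U_t$, $U_{r(t)}$, refers to the source metric $g_t$ or the target metric $g_{r(t)}$ — it is precisely this mismatch that stops the surviving term from being zero — and one must carefully peel off the radial component of each covariant derivative, since those are exactly the pieces absorbed into $\tau^{\nor}$ rather than $\tau^{\tan}$. A subsidiary point is the behaviour at the singular values $r(t)\in\tfrac{\pi}{2}\Z$, where $H$ is no longer the full isotropy at $\gamma(r(t))$ and the action fields degenerate; there one invokes the smoothness of $(k,r)$-maps from Püttmann's classification to extend the formula by continuity.
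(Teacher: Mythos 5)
The paper does not actually prove this statement: it is quoted verbatim from P\"uttmann--Siffert (Theorem~3.6 of the cited reference), so there is no in-paper argument to compare against. Your reconstruction is essentially the standard derivation and I find no gap in it. The two load-bearing points are both handled correctly: (i) equivariance gives $d\psi(X^*_{\gamma(t)})=X^*_{\gamma(r(t))}$ and sends orbit-tangential vectors to orbit-tangential ones and $\dot\gamma(t)$ to $\dot r(t)\,\dot\gamma(r(t))$, so after the Gauss decomposition only the intrinsic orbit connections survive in $\tau^{\tan}$; and (ii) the asymmetry between source and target is exactly that $\{E_\mu\}$ is $g_t$-orthonormal but not $g_{r(t)}$-orthonormal, which is why $\sum_\mu[E_\mu,P_tE_\mu]=0$ (your $A^{\top}A=P_t^{-1}$ computation, most transparently checked in a $Q$-orthonormal eigenbasis of $P_t$) kills the source term while the target term survives as $P_{r(t)}^{-1}\sum_\mu[E_\mu,P_{r(t)}E_\mu]_{\mathfrak{n}}$. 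Two minor remarks: the sign ambiguity in the Killing-field connection formula $\nabla_{X^*}Y^*\vert_o=\bigl(\mp\tfrac12[X,Y]_{\mathfrak{n}}+U(X,Y)\bigr)^*$ is immaterial here since only diagonal terms $X=Y$ enter the trace; and one should say explicitly that for regular values of $r(t)$ the isotropy at $\gamma(r(t))$ is again $H$ (the Weyl involutions normalize $H$), so the same reductive complement $\mathfrak{n}$ and the endomorphism $P_{r(t)}$ make sense on the image orbit --- you implicitly use this, and your closing remark about extending across $r(t)\in\tfrac{\pi}{2}\Z$ by continuity is the right way to handle the degenerate values.
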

 Following the construction of Lemma~\ref{actionfieldsofbasis}, for every $ k = 2, \ldots, p + 1 $ and every $ \ell = p + 3, \ldots, n + 1 $, the elements of $ \mathfrak{ n } $ given by
 \begin{equation*}\small
     \begin{split}
         \sec t \, E_{ 1, k }, \quad  \sec t \, F_{ 1, k }, \quad \csc t \, E_{ p + 2, \ell }, \quad \csc t \, F_{ p + 2, \ell }, \quad \tfrac{ 2 }{ \eta } \csc 2t \, D,
     \end{split}
 \end{equation*}
 are such that their corresponding action fields form an orthonormal basis of the tangent space of the orbit at $ \gamma ( t ) $ for $ t \neq 0, \tfrac{ \pi }{ 2 } $. By Proposition~\ref{Pt}, at the points where $ r ( t ) \neq \tfrac{ \pi }{ 2 } $ the endomorphisms $ P_{ r ( t ) } $ diagonalize simultaneously. Hence, we get the following:
 \begin{lemma}\label{lemmatangentialpart}
     Consider the natural $ \SU ( p + 1 ) \times \SU ( n - p ) $-action on $ \CPn $ with $ 0 \leq p < n $. The tangential component of the tension field of a $(k,r)$-map, where $ k \in 2 \Z + 1 $, vanishes.
 \end{lemma}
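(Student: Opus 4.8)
The plan is to substitute the explicit orthonormal frame constructed immediately before the statement into the formula of Theorem~\ref{tangentialpart} and to observe that, thanks to Proposition~\ref{Pt}, every commutator appearing there is a scalar multiple of a commutator of an element with itself, hence zero.

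Concretely, I would fix $t\in(0,\tfrac{\pi}{2})$ with $r(t)\notin\tfrac{\pi}{2}\Z$, so that $P_{r(t)}$ is invertible, and use as the orthonormal frame $\{E^{\ast}_{\mu\vert\gamma(t)}\}$ of Theorem~\ref{tangentialpart} the action fields of the elements
\[
  \sec t\,E_{1,k},\quad \sec t\,F_{1,k},\quad \csc t\,E_{p+2,\ell},\quad \csc t\,F_{p+2,\ell},\quad \tfrac{2}{\eta}\csc 2t\,D
\]
exhibited above. Each $E_\mu$ is a nonzero multiple of an element of the basis $\Lambda$ of~\eqref{basis}, and by Proposition~\ref{Pt} the set $\Lambda$ diagonalizes $P_s$ for \emph{every} $s\notin\tfrac{\pi}{2}\Z$; in particular each $E_\mu$ is an eigenvector of $P_{r(t)}$, say $P_{r(t)}E_\mu=c_\mu E_\mu$. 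Hence $[E_\mu,P_{r(t)}E_\mu]=c_\mu[E_\mu,E_\mu]=0$ for every $\mu$, so $\sum_\mu[E_\mu,P_{r(t)}E_\mu]=0$ and therefore $\tau^{\tan}_{\vert\gamma(t)}=\bigl(P_{r(t)}^{-1}\cdot 0\bigr)^{\ast}_{\vert\gamma(r(t))}=0$. This already gives the vanishing at every point of the normal geodesic whose image lies on a principal orbit, which is the full range of validity of the tangential-part formula and all that is needed for the harmonic-map equation of a $(k,r)$-map to reduce to~\eqref{odeintroduction}.

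For the literal statement one uses the hypothesis $k\in 2\Z+1$: by Püttmann's smoothness criterion recalled in Section~\ref{sectionconstruction} it makes $\psi$ smooth, so $\tau(\psi)$ is a continuous section of $\psi^{\ast}T\CPn$; at a point $\gamma(t_0)$ with $r(t_0)\in\tfrac{\pi}{2}\Z$ one then lets $t\to t_0$ through values with $r(t)\notin\tfrac{\pi}{2}\Z$, where $\tau(\psi)_{\vert\gamma(t)}$ is a multiple of $\dot\gamma(r(t))$ since its tangential part vanishes, and invokes that $\gamma$ meets every orbit --- principal or singular --- orthogonally to conclude that $\tau(\psi)_{\vert\gamma(t_0)}$ is normal to the image orbit as well. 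I do not anticipate a genuine obstacle here: the computational core is the identity $[X,X]=0$, the only substantive input being the simultaneous diagonalizability of $\{P_s\}$ furnished by Proposition~\ref{Pt}; what remains is the bookkeeping of confirming that the displayed frame is admissible in Theorem~\ref{tangentialpart} and running the short continuity argument at the points where $r$ meets $\tfrac{\pi}{2}\Z$.
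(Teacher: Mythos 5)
Your proposal is correct and follows exactly the paper's (implicit) argument: the orthonormal frame consists of scalar multiples of elements of the basis $\Lambda$, which diagonalizes $P_s$ for all admissible $s$ by Proposition~\ref{Pt}, so each commutator $[E_\mu,P_{r(t)}E_\mu]$ vanishes and hence so does $\tau^{\tan}$. The only addition is your continuity argument at points where $r(t)\in\tfrac{\pi}{2}\Z$, which the paper leaves tacit but which is a harmless and valid completion.
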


 \subsection{Normal component of the tension field}
 
 The normal component of the tension field for the $ ( k , r ) $-maps has been derived in~\cite{puttman2019harmonic}[Theorem 3.4], and can be written in terms of the endomorphisms $ P_t $ as
\begin{equation*}\small
    \tau^{ \nor }_{ \vert \gamma ( t ) } = \ddot r ( t ) + \tfrac{ 1 }{ 2 } \dot r ( t ) \tr P_t^{ - 1 } \dot P_t - \tfrac{ 1 }{ 2 } \tr P_t^{ - 1 } ( \dot P )_{ r ( t ) }
\end{equation*}
for every $ t \notin \tfrac{ \pi }{ 2 } \Z $. We obtain the following result using this identity, Proposition~\ref{Pt} and Lemma~\ref{lemmatangentialpart}.
\begin{theorem}
Consider the natural $ \SU ( p + 1 ) \times \SU ( n - p ) $-action on $ \CPn $ with $ 0 \leq p < n $. The tension field of a $ ( k, r ) $-map vanishes if and only if $ r $ satisfies the boundary value problem
\begin{equation}\label{ODE}
    \begin{split}
        \ddot{ r } ( t ) &+ [ ( 2 n - 2 p - 1 ) \cot t - ( 2 p + 1 ) \tan t ] \, \dot{ r } ( t ) \\
        &+ \left[ \frac{ p }{ \cos^2 t } - \frac{ ( n - p - 1 ) }{ \sin^2 t } \right] \, \sin 2 r ( t ) - \frac{ \sin 4 r ( t ) }{ \sin^2 2 t } = 0
    \end{split}
\end{equation}
for smooth functions $ r : ( 0, \frac{ \pi }{ 2 } ) \to \R $ with
\begin{equation}\label{boundaryconditions}
    \lim_{ t \rightarrow 0 } r ( t ) = 0 \quad \text{and} \quad \lim_{ t \rightarrow \frac{ \pi }{ 2 }} r ( t ) = k \tfrac{ \pi }{ 2 },
\end{equation}
where $ k \in 2 \Z + 1 $.
\end{theorem}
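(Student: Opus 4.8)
The plan is to substitute the explicit endomorphism $P_t$ obtained in Proposition~\ref{Pt} into the Püttmann--Siffert expressions for the normal and tangential components of the tension field, and to check that the resulting equation is exactly~\eqref{ODE}. Since Lemma~\ref{lemmatangentialpart} already shows that the tangential component $\tau^{\tan}_{\vert\gamma(t)}$ vanishes for every odd $k$, the whole tension field reduces to its normal part, so the only real task is to identify $\tau^{\nor}_{\vert\gamma(t)}$ for $t\in(0,\tfrac{\pi}{2})$.

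First I would use that, in the orthonormal basis $\Lambda$ of~\eqref{basis}, Proposition~\ref{Pt} presents $P_t$ as a \emph{diagonal} endomorphism with eigenvalues $\cos^2 t$ (multiplicity $2p$), $\sin^2 t$ (multiplicity $2(n-p-1)$) and $\tfrac{\eta^2}{4}\sin^2 2t$ (multiplicity $1$). Hence $P_t^{-1}$, $\dot P_t$ and $(\dot P)_{r(t)}$ are all diagonal in the same basis and commute, so each trace in
\begin{equation*}\small
    \tau^{\nor}_{\vert\gamma(t)} = \ddot r(t) + \tfrac12\dot r(t)\,\tr\big(P_t^{-1}\dot P_t\big) - \tfrac12\,\tr\big(P_t^{-1}(\dot P)_{r(t)}\big)
\end{equation*}
splits as a sum over the three eigenspaces. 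A direct block-by-block computation gives $\tr(P_t^{-1}\dot P_t) = -4p\tan t + 4(n-p-1)\cot t + 4\cot 2t$, which by the identity $\cot 2t = \tfrac12(\cot t - \tan t)$ equals $2\big[(2n-2p-1)\cot t - (2p+1)\tan t\big]$; this reproduces the coefficient of $\dot r(t)$ in~\eqref{ODE}. Writing $r = r(t)$, the other trace is $\tr(P_t^{-1}(\dot P)_{r(t)}) = -\tfrac{2p\sin 2r}{\cos^2 t} + \tfrac{2(n-p-1)\sin 2r}{\sin^2 t} + \tfrac{4\sin 2r\cos 2r}{\sin^2 2t}$, and $2\sin 2r\cos 2r = \sin 4r$ turns the last summand into $\tfrac{2\sin 4r}{\sin^2 2t}$. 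Substituting both expressions into the displayed formula produces precisely the left-hand side of~\eqref{ODE}.

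It then remains to handle the endpoints. The conditions $r(0)=0$ and $r(\tfrac{\pi}{2})=k\tfrac{\pi}{2}$ in~\eqref{boundaryconditions} are part of the definition of a $(k,r)$-map and, as explained in Section~\ref{sectionconstruction}, here $k$ is forced to be an odd integer; moreover such a $(k,r)$-map is smooth on all of $\CPn$, so the tension field vanishes on the open part $\gamma\big((0,\tfrac{\pi}{2})\big)$ if and only if it vanishes everywhere. Together with the identification of $\tau^{\nor}_{\vert\gamma(t)}$ on the principal part, this yields the stated equivalence.

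I do not expect a genuine obstacle: the argument is mostly bookkeeping once Proposition~\ref{Pt} and Lemma~\ref{lemmatangentialpart} are available. The two points that need attention are the correct reading of $(\dot P)_{r(t)}$ — one differentiates $s\mapsto P_s$ and \emph{then} evaluates at $s=r(t)$, while $P_t^{-1}$ is taken at the parameter $t$ — and the fact, already noted before Lemma~\ref{lemmatangentialpart}, that $P_t$ and $P_{r(t)}$ are simultaneously diagonal, which is exactly what makes all the traces collapse into sums over the three eigenspaces.
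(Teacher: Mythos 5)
Your proposal is correct and follows the same route as the paper: the tangential component vanishes by Lemma~\ref{lemmatangentialpart}, and the normal component is obtained by substituting the diagonal form of $P_t$ from Proposition~\ref{Pt} into the Püttmann--Siffert formula. Your trace computations check out (in particular $\tfrac12\tr(P_t^{-1}\dot P_t)=(2n-2p-1)\cot t-(2p+1)\tan t$ via $\cot 2t=\tfrac12(\cot t-\tan t)$, and the third block contributing $-\sin 4r/\sin^2 2t$), and you correctly handle the evaluation convention for $(\dot P)_{r(t)}$ and the boundary/oddness conditions on $k$.
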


\section{New harmonic self-maps of \texorpdfstring{$ \CPn $}{CPn}}\label{sectionsolutions}

In this section, we prove the existence of infinitely many harmonic self-maps of $ \CPn $ and study their limiting configuration by giving a family of uncountable many explicit solutions to the boundary value problem~\eqref{ODE},~\eqref{boundaryconditions}. Moreover, we explicitly compute the energy for each of these harmonic maps.

We start by proving that every solution of~\eqref{ODE} we construct here is unique in a certain class of functions, as the following lemma states.
\begin{lemma}\label{uniqueness}
    Every smooth solution $ r : ( 0, \frac{ \pi }{ 2 } ) \to \R $ of~\eqref{ODE} with singular initial data
    \begin{equation*}
        \lim_{ t \rightarrow 0^{ + } } r ( t ) = 0 \quad \text{and} \quad \lim_{ t \rightarrow 0^{ + } } \dot{ r } ( t ) = \rho,
    \end{equation*}
    is unique.
\end{lemma}
\begin{proof}
    The strategy is similar to the approach used in~\cite{gastel2004harmonic}[Lemma~2.1]. The initial data implies that
    \begin{equation}\label{conditionuniqueness2}\small
        r ( t ) = \rho t + O ( t^2 ), \quad \dot{ r } ( t ) = \rho + O ( t ) \quad \text{as} \quad t \to 0^{ + }.
    \end{equation}
    If we perform the change of coordinates
    \begin{equation*}\small
        t ( x ) = \arctan e^x ,
    \end{equation*}
    equation~\eqref{ODE} reads
    \begin{equation}\label{odechangeofvariable}\small
        \begin{split}
            \ddot{ r } ( x ) & + \left[ ( n - p - 1 ) e^{ - x } - p e^x \right] \, \tfrac{2}{ e^x + e^{ - x } } \, \dot{ r } ( x ) \\
            & - \left[ ( n - p - 1 ) e^{ - x } - p e^x \right] \, \tfrac{ 1 }{ e^x + e^{ - x } } \, \sin 2 r ( x ) - \tfrac{ 1 }{ 4 } \sin 4 r ( x ) = 0 ,
        \end{split}
    \end{equation}
    and~\eqref{conditionuniqueness2} is translated to
    \begin{equation}\label{conditionuniquenesschangeofvariable}\small
        r ( x ) = \rho e^x + O ( e^{ 2 x } ), \quad \dot{ r } ( x ) = \rho e^x + O ( e^{ 2 x } ) \quad \text{as} \quad x \to - \infty.
    \end{equation}
    
    Suppose then that $ r_1 $ and $ r_2 $ are two solutions of~\eqref{odechangeofvariable} satisfying~\eqref{conditionuniquenesschangeofvariable}. For every $ s \in ( 0 , 1 ) $, define the functions $ \eta_s $ by
    \begin{equation*}\small
        \eta_s ( x ) := r_1 ( x ) + s ( e^x + e^{ 2 x } ).
    \end{equation*}
    Due to~\eqref{conditionuniquenesschangeofvariable}, there exists a constant $ c ( s ) \in \R $ such that
    \begin{equation}\label{inequalityuniqueness}\small
        \eta_{ - s } ( x ) \leq r_2 ( x ) \leq \eta_s ( x )
    \end{equation}
    for $ x \in ( - \infty , c ( s ) ) $. The rest of the proof is dedicated to showing that there exists a constant $ \kappa \in \R $ independent of $ s $ such that property~\eqref{inequalityuniqueness} can be extended to $ ( - \infty , \kappa ) $ for every $ s $. This would imply that $ r_2 $ lies between $ \eta_{ - s } $ and $ \eta_{ s } $ for $ s $ arbitrarily small, so $ r_1 = r_2 $ in $ ( - \infty , \kappa ) $ and therefore in all of $ \R $.
    
    Rewrite~\eqref{odechangeofvariable} as $ L r = 0 $, then
    \begin{equation*}\small
        L \eta_s ( x )  = s ( 2 n - 2 p + 1 ) e^{ 2 x } + s O ( e^{ 3 x } ) \quad \text{as} \quad x \to - \infty.
    \end{equation*}
    This implies the existence of a constant $ c_1 $ (independent of $ s $) such that
    \begin{equation}\label{c1}\small
        L ( \eta_{ - s } ) ( x ) < 0, \quad L ( \eta_s ) ( x ) > 0,
    \end{equation}
    for $ x \in ( - \infty , c_1 ) $. Note also that
    \begin{equation}\label{c2}\small
        f ( x ) = \left[ ( n - p - 1 ) e^{ - x } - p e^x \right] \tfrac{ 1 }{ e^x + e^{ - x } } > 0
    \end{equation}
    for $ x \in ( - \infty , c_2 ) $, where $ c_2 = \frac{ 1 }{ 2 } \ln \frac{ n - p - 1 }{ p } $. Furthermore, if we write $ { h_{ s }^+ := \eta_s - r_2 } $, then $ h_s^+ ( x ) = s e^x + O ( e^{ 2 x } ) $ as $ x \to - \infty $, so there exists $ c_3 $ such that
    \begin{equation}\label{c3}\small
        | h_s^+ ( x ) | \leq \frac{ \pi }{ 8 }
    \end{equation}
    holds for every $ x \in ( - \infty , c_3 ) $. The constant $ c_3 $ can be taken independently of $ s $ because $ s \in ( 0 , 1 ) $.
    
    Take then
    \begin{equation*}\small
        \kappa = \min \{ c_1 , c_2 , c_3 \}.
    \end{equation*}
    Arguing by contradiction, suppose there exists $ x_0 ( s ) < \kappa $ such that $ h_s^+ ( x_0 ( s ) ) = 0 $. Without loss of generality, we can assume that $ x_0 ( s ) $ is the first time this happens. Using~\eqref{c1} and since $ r_2 $ is a solution of~\eqref{odechangeofvariable},
    \begin{equation*}\small
        \begin{split}
            0 < L ( \eta_s ) = L ( \eta_s ) - L ( r_2 ) =& \ddot{ h }_s^+ ( x ) + 2 f ( x ) \, \dot{ h }_s^+ ( x ) - f ( x ) ( \sin 2 \eta_s ( x ) - \sin 2 r_2 ( x ) ) \\
            &- \tfrac{ 1 }{ 4 } ( \sin 4 \eta_s ( x ) - \sin 4 r_2 ( x ) ).
        \end{split}
    \end{equation*}
    By~\eqref{c3} we have that $ \sin 2 \eta_s ( x ) > \sin 2 r_2 ( x ) $ and $ \sin 4 \eta_s ( x ) > \sin 4 r_2 ( x ) $ for $ x \in ( -\infty, x_0 ( s ) ) $. This, together with~\eqref{c2}, yields the following differential inequality
    \begin{equation*}\small
        0 < \ddot{ h }_s^+ ( x ) + 2 f ( x ) \, \dot{ h }_s^+ ( x ) 
    \end{equation*}
    for every $ x \in ( - \infty , x_0 ( s ) ) $, which implies that $ h_s^+ $ cannot have a maximum there, contradicting our assumption. A similar argument applies to $ h_s^- := r_2 - \eta_{ - s } $. In other words, property~\eqref{inequalityuniqueness} holds in $ ( - \infty , \kappa ) $, as we wanted to show.
\end{proof}

\begin{theorem}\label{newhm}
    Let $ \rho \in \R $ and $ \ell \in \Z $, the functions defined by
    \begin{equation*}
        r_{ \rho, \ell } ( t ) = \arctan ( \rho \tan t ) + \ell \pi, \quad \kappa_{\ell} ( t ) = \ell \tfrac{ \pi }{ 2 } 
    \end{equation*}
    for every $ t \in ( 0, \tfrac{ \pi }{ 2 } ) $ satisfy the following properties:
    \begin{enumerate}
        \item[1.]  As $ \rho $ goes to $ \infty $, $ r_{ \rho, \ell }$ converges uniformly to $ \kappa_{ 2\ell + 1 } $. As $ \rho $ goes to $ - \infty $, $ r_{ \rho, \ell } $ converges uniformly to $ \kappa_{ 2\ell - 1 } $.
        \item[2.] The functions $ r_{ \rho, \ell } $ and $ \kappa_{ \rho, \ell } $ are solutions for the ordinary differential equation~\eqref{ODE}.
        \item[3.] If $ { \rho \neq 0} $, the function $ r_{ \rho, 0 } $ is the unique solution for~\eqref{ODE},~\eqref{boundaryconditions} satisfying $ \dot{ r } ( t ) \to \rho$ as $ t \to 0^+ $.
    \end{enumerate}
\end{theorem}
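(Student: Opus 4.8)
The plan is to establish the three assertions in turn, the real substance being the direct verification of the ODE~\eqref{ODE} in part~2. The constant functions $\kappa_\ell(t)=\ell\tfrac{\pi}{2}$ are immediate: $\ddot\kappa_\ell=\dot\kappa_\ell=0$, while $\sin 2\kappa_\ell(t)=\sin(\ell\pi)=0$ and $\sin 4\kappa_\ell(t)=\sin(2\ell\pi)=0$, so every summand of~\eqref{ODE} vanishes identically. For $r_{\rho,\ell}$ I would first reduce to the case $\ell=0$: since $r_{\rho,\ell}=r_{\rho,0}+\ell\pi$, the derivatives $\dot r_{\rho,\ell},\ddot r_{\rho,\ell}$ coincide with those of $r_{\rho,0}$, and $\sin 2r_{\rho,\ell}=\sin(2r_{\rho,0}+2\ell\pi)=\sin 2r_{\rho,0}$, and likewise $\sin 4r_{\rho,\ell}=\sin 4r_{\rho,0}$, so the left-hand side of~\eqref{ODE} is unchanged by the integer shift. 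Hence it suffices to treat $r=r_{\rho,0}=\arctan(\rho\tan t)$ for a fixed $\rho\in\R$.

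Writing $u:=\tan r=\rho\tan t$, the relations $\sec^2 r\,\dot r=\rho\sec^2 t$ and $\cos^2 r=(1+u^2)^{-1}$ give $\dot r=\rho\,Q^{-1}$ with $Q:=\cos^2 t+\rho^2\sin^2 t$, hence $\ddot r=\rho(1-\rho^2)Q^{-2}\sin 2t$; similarly $\sin 2r=\rho\,Q^{-1}\sin 2t$ and $\cos 2r=R\,Q^{-1}$ with $R:=\cos^2 t-\rho^2\sin^2 t$, so $\sin 4r=2\rho R\,Q^{-2}\sin 2t$. I would substitute these into~\eqref{ODE}; the two $n,p$-dependent coefficients then collapse, since $(2n-2p-1)-2(n-p-1)=1$ and $-(2p+1)+2p=-1$, which is precisely why the solutions do not depend on $n$ and $p$. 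After factoring out the nonzero quantity $\rho\,(Q^2\sin 2t)^{-1}$, the whole left-hand side of~\eqref{ODE} reduces to the polynomial identity $(1-\rho^2)\sin^2 2t+2Q\cos 2t-2R=0$; expanding in powers of $\cos^2 t$ and $\sin^2 t$ and separating the $\rho$-independent part from the $\rho^2$-part, each turns out to be a multiple of $\sin^2 t+\cos^2 t-1=0$. This is the only genuinely computational step, and it is where I expect the (modest) bulk of the effort to lie.

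For part~1, fix $t\in(0,\tfrac{\pi}{2})$: then $\arctan(\rho\tan t)\to\pm\tfrac{\pi}{2}$ as $\rho\to\pm\infty$, so $r_{\rho,\ell}(t)\to\pm\tfrac{\pi}{2}+\ell\pi=\kappa_{2\ell\pm1}(t)$ pointwise. For the uniform statement I would use the identity $\tfrac{\pi}{2}-\arctan x=\arctan(1/x)$ for $x>0$ together with the oddness of $\arctan$, which yields $\bigl|\arctan(\rho\tan t)-\operatorname{sgn}(\rho)\tfrac{\pi}{2}\bigr|=\arctan\bigl(1/(|\rho|\tan t)\bigr)$; on any interval $[\delta,\tfrac{\pi}{2}]$ with $\delta>0$ this is dominated by $\arctan\bigl(1/(|\rho|\tan\delta)\bigr)$, which tends to $0$ as $|\rho|\to\infty$, so the convergence is uniform away from $t=0$. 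Finally, part~3 follows by combining part~2 with the elementary observations that $r_{\rho,0}$ is smooth on $(0,\tfrac{\pi}{2})$, that $\lim_{t\to0}\arctan(\rho\tan t)=0$, and that $\lim_{t\to\pi/2}\arctan(\rho\tan t)=\operatorname{sgn}(\rho)\tfrac{\pi}{2}=k\tfrac{\pi}{2}$ with $k=\pm1\in2\Z+1$, so the boundary conditions~\eqref{boundaryconditions} hold (and, if one wants the associated map $\psi_{\rho}$ to be smooth, one checks routinely that $\arctan(\rho\tan t)$ extends with odd parity about each endpoint).
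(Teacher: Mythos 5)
Your proposal is correct and follows essentially the same route as the paper: the paper's proof also reduces to $\ell=0$ by $\pi$-periodicity of $\sin 2r$ and $\cos 2r$, computes the same four quantities $\dot r_{\rho,0}=\rho Q^{-1}$, $\ddot r_{\rho,0}=\rho(1-\rho^2)Q^{-2}\sin 2t$, $\sin 2r_{\rho,0}=\rho Q^{-1}\sin 2t$, $\cos 2r_{\rho,0}=RQ^{-1}$, and collapses the $n,p$-dependent terms into $2\rho\cot(2t)/Q$ exactly as your coefficient identities $(2n-2p-1)-2(n-p-1)=1$ and $-(2p+1)+2p=-1$ predict, before verifying the residual cancellation. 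The paper dismisses parts 1 and 3 with ``the remaining properties follow directly,'' so your explicit arguments there are if anything more complete; in particular your (correct) observation that $\sup_{t\in(0,\pi/2)}\lvert\arctan(\rho\tan t)-\tfrac{\pi}{2}\rvert=\tfrac{\pi}{2}$ for every finite $\rho$ shows the convergence in part 1 is only uniform on compact subsets of $(0,\tfrac{\pi}{2}]$, so the theorem's ``uniformly'' must be read in that locally uniform sense.
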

\begin{proof}
    Since the functions defined by $ \sin 2 r, \cos 2 r $ for every $ r \in \R $ are $ \pi $-periodic, if we show that $ r_{ \rho, 0 } $ is a solution for the ordinary differential equation~\eqref{ODE}, then so is $ r_{ \rho, \ell } $ for any $ \ell \in \Z $. To prove this, note the following relations:
    \begin{align*}\small
        \dot{ r }_{ \rho, 0 } ( t ) = \frac{ \rho }{ \rho^2 \sin^2 t + \cos^2 t }&,  && \hspace{0.67cm} \ddot{ r }_{ \rho,0 } ( t ) = \frac{ ( \rho - \rho^3 ) \sin 2 t }{ ( \rho^2 \sin^2 t + \cos^2 t )^2 }, \\[1ex]
        \sin 2 r_{ \rho, 0 } ( t ) = \frac{ \rho \sin 2 t }{ \rho^2 \sin^2 t + \cos^2 t }&,  &&\cos 2 r_{ \rho, 0 } ( t ) = \frac{ \cos^2 t - \rho^2 \sin^2 t }{ \rho^2 \sin^2 t + \cos^2 t } .
    \end{align*}
    Consequently, we obtain the following equality
    \begin{equation*}\small
        \begin{split}
            \frac{ 2 \rho \cot 2 t }{ \rho^2 \sin^2 t + \cos^2 t } &= [ ( 2 n - 2 p - 1 ) \cot t - ( 2 p + 1 ) \tan t ] \dot{r}_{ \rho, 0 } ( t ) \\[1ex]
            &+ \left[ \frac{ p } { \cos^2 t } - \frac{ ( n - p - 1 ) }{ \sin^2 t } \right] \sin 2 r_{ \rho, 0 } ( t ).
        \end{split}
    \end{equation*}
    If we plug this identity in equation~\eqref{ODE}, we get the expression
    \begin{equation*}\small
        \begin{split}
        &\ddot{ r }_{ \rho, 0 } ( t ) + \frac{ 2 \rho \cot 2 t }{ \rho^2 \sin^2 t + \cos^2 t } - 2 \frac{ \sin 2 r_{ \rho, 0 } ( t ) \, \cos 2 r_{ \rho, 0 } ( t ) }{ \sin^2 2 t } \\[1ex]
        =&\frac{ ( \rho - \rho^3 ) \sin 2 t }{ ( \rho^2 \sin^2 t + \cos^2 t )^2 } +  \frac{ 2 \rho \cot 2 t }{ \rho^2 \sin^2 t + \cos^2 t } - \frac{ \rho \cot t - \rho^3 \tan t }{ ( \rho^2 \sin^2 t +\cos^2 t )^2 }=0.
        \end{split}
    \end{equation*}
    Uniqueness is a consequence of Lemma~\ref{uniqueness}. The remaining properties follow directly. 
\end{proof}

For every $ \rho \in \R \setminus \{ 0 \} $, we will write $ \psi_{ \rho } $ for the harmonic self-map of $ \CPn $ corresponding to the solution $ r_{ \rho, 0 } $. This is, the $ ( \pm 1, r_{ \rho, 0 } ) $-map constructed in Theorem~\ref{newhm}.

One might ask whether the maps $ \psi_{ \rho } $ are holomorphic, antiholomorphic, or neither. It turns out that the answer depends on the value of $ \rho $. To check this, recall that the holomorphicity of $ \psi_{ \rho } $ relies on the condition
\begin{equation*}\small
    d \psi \circ J = J \circ d \psi.
\end{equation*}
Using Lemma~\ref{actionfieldsofbasis}, one can easily check that this condition is trivial for the vectors tangent to the orbits and that the only relevant condition is given by
\begin{equation*}\small
    d \psi \, ( J \dot{ \gamma } ( t ) ) = J d \psi \, ( \dot{ \gamma } ( t ) ) .
\end{equation*}
A straightforward computation shows that, in our case, this condition reads as the following first-order ordinary differential equation
\begin{equation*}\small
    \dot{ r } ( t ) \, ( 1 + \tan^2 r ( t ) ) \, \tan t = ( 1 + \tan^2 t ) \, \tan r ( t ) .
\end{equation*}
The solutions to this equation are precisely the functions $ r_{ \rho, 0 } $ given in Theorem~\ref{newhm} for $ \rho > 0 $. Hence, the maps $ \psi_{ \rho } $ are all holomorphic for $ \rho > 0 $ and non-holomorphic for $ \rho < 0 $. Note that the maps $ \psi_{ \rho } $ are not anti-holomorphic either, since the condition $ d \psi \circ J = - J \circ d \psi $ does not hold for the vectors tangent to the orbits. For the sake of completeness, in the next section we will prove that the maps $ \psi_{ \rho } $ are holomorphic for $ \rho > 0 $ using a known result concerning harmonic maps between compact K\"ahler manifolds.

One might also wonder how the energy of the harmonic maps $ \psi_{ \rho } $ depends on the parameter $ \rho $: it turns out that $ E ( \psi_{ \rho } ) $ is not only insensitive to $ \rho $ but also to the natural number $ p $, as the following proposition shows.
\begin{proposition}
    For $ \rho \neq 0 $, the energy of the harmonic map $ \psi_{ \rho } $ constructed above is given by
    \begin{equation*}\small
        E ( \psi_{ \rho } ) = n \text{Vol} ( \CPn ) = \tfrac{ \pi^n }{ ( n - 1 ) ! }.
    \end{equation*}
\end{proposition}
\begin{proof}
    A straightforward computation shows that the energy of a $(k,r)$-map is given by
    \begin{equation*}\small
        E ( \psi ) = \frac{ 1 }{ 2 } \int_{ \CPn } \left[ \dot{ r } ( t )^2 + 2 p \tfrac{ \cos^2 r ( t ) }{ \cos^2 t } + 2 ( n - p - 1 ) \tfrac{ \sin^2 r ( t ) }{ \sin^2 t } + \tfrac{ \sin^2 2 r ( t ) }{ \sin^2 2 t }  \right] \, dV_{ g_{ FS } } .
    \end{equation*}
    If we plug now the functions $ r_{ \rho , 0 } $ for $ \rho \neq 0 $ and simplify, we obtain
    \begin{equation}\label{energycalc}\small
        E ( \psi_{ \rho } ) = \frac{ 1 }{ 2 } \int_{ \CPn } \left[ \frac{ 2 \kappa_1 + 2 \kappa_2 \cos^2 t }{ ( \rho^2 \, \sin^2 t + \cos^2 t )^2 } \right] \, dV_{ g_{ FS } }
    \end{equation}
    where
    \begin{equation*}\small
        \begin{split}
            \kappa_1 &= \rho^{ 4 } n - \rho^{ 4 } + \rho^{ 2 } - \rho^{ 2 } ( \rho^{ 2 } - 1 ) p, \\[1ex]
            \kappa_2 &= \rho^{ 4 } - \rho^{ 2 } - \rho^{ 2 } ( \rho^{ 2 } - 1 ) n + ( \rho^{ 2 } - 1 )^2 p .
        \end{split}
    \end{equation*}
    From this, we see that $ E ( \psi_{ \rho } ) = E( \psi_{ - \rho } ) $. Since $ \{ \psi_{ \rho } \}_{ \rho > 0 } $ is a one-parameter family of harmonic maps, we have that
    \begin{equation*}\small
        \frac{ d }{ d \rho } E ( \psi_{ \rho } ) =  - \int_{ \CPn } g_{ FS } \left( \frac{ d }{ d\rho } \psi_{\rho }  , \tau( \psi_{ \rho } ) \right) \, dV_{ g_{ FS } } = 0.
    \end{equation*}
    In particular, it is enough to compute~\eqref{energycalc} for $ \rho = 1 $. Substituting, we get
    \begin{equation*}\small
        \begin{split}
            E ( \psi_{ \rho } ) &= n \int_{ \CPn } \, dV_{ g_{ FS } } = n \text{Vol} ( \CPn ) = \tfrac{ \pi^n }{ ( n - 1 ) ! } .
        \end{split}
    \end{equation*}
\end{proof}

\section{Stability of solutions}\label{sectionstability}

In the present section, we study the stability of the harmonic maps constructed. For every $ \rho > 0 $, we prove that the harmonic map $ \psi_{ \rho } $ is weakly stable. For $ { \rho < 0 } $, we show that $ \psi_{ \rho } $ is equivariantly weakly stable. Moreover, we explicitly compute the equivariant spectra of the maps $ \psi_1, \psi_{ - 1 } $ in the family of $ \SU ( \tfrac{ n + 1 }{ 2 } ) \times \SU ( \tfrac{ n + 1 }{ 2 } ) $-equivariant self-maps of $ \CPn $.

\begin{theorem}\label{stability1}
    For every $ \rho >0 $, the harmonic map $ \psi_{ \rho } : \CPn \to \CPn $ is weakly stable.
\end{theorem}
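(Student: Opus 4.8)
The plan is to reduce Theorem~\ref{stability1} to Theorem~\ref{weaklystability}: since $(\CPn,g_{FS})$ is a compact Kähler manifold, it suffices to show that $\psi_\rho$ is a \emph{holomorphic} harmonic map for every $\rho>0$. Harmonicity is already supplied by Theorem~\ref{newhm}, so the entire content of the argument is holomorphicity.

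To obtain holomorphicity I would argue by deformation, leaning on the rigidity Theorem~\ref{regularity}. First I would observe that $r_{1,0}(t)=\arctan(\tan t)=t$ on $(0,\tfrac{\pi}{2})$, so $\psi_1$ is the identity map of $\CPn$, which is holomorphic. By Theorem~\ref{newhm} the maps $\psi_\rho$ form a smooth family of harmonic self-maps of $\CPn$ parametrized by $\rho\in(0,\infty)$. Let $A\subset(0,\infty)$ denote the set of $\rho$ for which $\psi_\rho$ is holomorphic. Then $A\neq\emptyset$ since $1\in A$; the set $A$ is closed, because the condition $J\circ d\psi_\rho=d\psi_\rho\circ J$ is a pointwise linear constraint that survives passage to limits along the smooth family; and $A$ is open, because near any $\rho_0\in A$ the family $\{\psi_\rho\}$ is a smooth harmonic deformation of the holomorphic map $\psi_{\rho_0}$, so Theorem~\ref{regularity} forces $\psi_\rho$ to remain holomorphic for $\rho$ close to $\rho_0$. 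Connectedness of $(0,\infty)$ then gives $A=(0,\infty)$. (Alternatively, holomorphicity for $\rho>0$ can be read off directly from the first-order ordinary differential equation computed in Section~\ref{sectionsolutions}.)

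With holomorphicity established, I would conclude by applying Theorem~\ref{weaklystability} with $M=N=\CPn$ and $\psi=\psi_\rho$, which yields
\begin{align*}
    \delta^2 E(\psi_\rho)(V,V) &= \int_{\CPn} g_{FS}\bigl(J_{\psi_\rho}V,V\bigr)\,dV_{g_{FS}} \\
    &= \frac{1}{2}\int_{\CPn} g_{FS}\bigl(DV,DV\bigr)\,dV_{g_{FS}} \;\geq\; 0
\end{align*}
for every $V\in\Gamma(\psi_\rho^*T\CPn)$, i.e.\ $\psi_\rho$ is weakly stable.

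The step I expect to be the main obstacle is the open-closed argument: one has to make sure the deformation $\rho\mapsto\psi_\rho$ is smooth enough near each parameter for Theorem~\ref{regularity} to apply, and that holomorphicity is genuinely a closed condition, so that it propagates from the single holomorphic map $\psi_1$ to the whole ray $(0,\infty)$. Once this is in place, weak stability is an immediate consequence of the Kähler identity in Theorem~\ref{weaklystability}, and no genuine analytic difficulty remains.
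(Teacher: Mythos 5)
Your proposal is correct and follows essentially the same route as the paper: harmonicity from Theorem~\ref{newhm}, holomorphicity of $\psi_1=\mathrm{id}$, propagation of holomorphicity along the family $\{\psi_\rho\}_{\rho>0}$ via the rigidity Theorem~\ref{regularity}, and then Theorem~\ref{weaklystability} to conclude weak stability. The only difference is cosmetic: you spell out the propagation as an open--closed argument on $(0,\infty)$, whereas the paper applies Theorem~\ref{regularity} to the whole smooth harmonic family at once.
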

\begin{proof}
    The fact that $ \psi_{ \rho } $ is harmonic for every $ \rho > 0 $ follows from Theorem~\ref{newhm}. In particular, $ \psi_1 $ is the identity map of $ \CPn $, and hence, holomorphic.

    Moreover, $ \psi_{ \rho } : \CPn \to \CPn $ where $ \rho \in ( 0, \infty ) $ is a smooth variation of harmonic self-maps of a compact K\"ahler manifold. Thus, as a consequence of Theorem~\ref{regularity}, $ \psi_{ \rho } $ is a holomorphic harmonic map for every $ \rho \in ( 0, \infty ) $. The weak stability follows then as a direct application of Theorem~\ref{weaklystability}.
\end{proof}

Recall that the eigenvalue problem describing the equivariant stability of a harmonic $ ( k, r ) $-map in a cohomogeneity one manifold can be expressed as
\begin{equation*}\small
    \ddot \xi ( t ) + \tfrac{ 1 }{ 2 } \tr ( P_t^{ - 1 } \dot P_t ) \, \dot \xi ( t ) - \tfrac{ 1 }{ 2 } \tr ( P_t^{ - 1 } \ddot P_{ r ( t ) } ) \, \xi ( t ) + \lambda \xi ( t ) = 0
\end{equation*}
where $ \xi \in C_0^{ \infty } ( [ 0, \tfrac{ \pi }{ 2 } ] ) $. Proposition~\ref{Pt} shows that in our case this expression reduces to
\begin{equation}\label{spparticular}\small
    \begin{split}
        &\ddot{ \xi } ( t ) + \left[ ( 2 n - 2 p - 1 ) \cot t - ( 2 p + 1 ) \tan t  \right] \, \dot{ \xi } ( t ) \\[1ex]
        &- \left[ 2 ( n - p - 1 ) \tfrac{ \cos 2r_{ \rho } ( t ) }{ \sin^2 t } - 2 p \tfrac{ \cos 2r_{ \rho } ( t ) }{ \cos^2 t } + 4 \tfrac{ \cos 4 r_{ \rho } ( t ) }{ \sin^2 2t } \right] \xi ( t ) + \lambda \xi ( t ) = 0
    \end{split}
\end{equation}
where $ \xi \in C_0^{ \infty } ( [ 0, \tfrac{ \pi }{ 2 } ] ) $.

\begin{theorem}
    For every $ \rho < 0 $, the map $ \psi_{ \rho } : \CPn \to \CPn $ is equivariantly weakly stable.
\end{theorem}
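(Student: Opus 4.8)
The plan is to exhibit an explicit \emph{positive} solution, on the open interval $(0,\tfrac\pi2)$, of the $\lambda=0$ case of the Sturm--Liouville equation \eqref{spparticular}, and then to invoke the classical fact that the existence of such a solution forces the bottom of the equivariant spectrum to be nonnegative. The positive solution comes for free by differentiating the family of solutions of Theorem~\ref{newhm} with respect to the parameter $\rho$.

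First I would note that linearizing the reduced equation \eqref{ODE} about a solution $r$ — that is, replacing $\sin 2r$ by $2\cos 2r\,\xi$ and $\sin 4r$ by $4\cos 4r\,\xi$ — produces precisely the operator appearing in \eqref{spparticular} with $\lambda=0$. Since $r_{\rho,0}(t)=\arctan(\rho\tan t)$ solves \eqref{ODE} for every $\rho$ by Theorem~\ref{newhm}, differentiating in $\rho$ shows that
\[
    \phi_\rho(t):=\frac{\partial}{\partial\rho}\,r_{\rho,0}(t)=\frac{\tan t}{1+\rho^2\tan^2 t}=\frac{\sin 2t}{2\,(\cos^2 t+\rho^2\sin^2 t)}
\]
is a Jacobi field, i.e.\ a solution of \eqref{spparticular} with $\lambda=0$ on $(0,\tfrac\pi2)$. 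The decisive point is that $\phi_\rho(t)>0$ for all $t\in(0,\tfrac\pi2)$ and all $\rho\neq 0$, because $\sin 2t>0$ there and the denominator is strictly positive.

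Next I would put \eqref{spparticular} into self-adjoint form by multiplying with the integrating factor $w(t)=(\sin t)^{2n-2p-1}(\cos t)^{2p+1}$ of the first-order term — up to a constant the volume density $\sqrt{\det P_t}$ of the principal orbits, by Proposition~\ref{Pt} — obtaining $-\tfrac{d}{dt}(w\,\dot\xi)+w\,V_\rho\,\xi=\lambda\,w\,\xi$, where $V_\rho$ denotes the potential of \eqref{spparticular}. For an admissible equivariant variation $\xi$, the ground-state substitution $\xi=\phi_\rho\,\eta$ together with one integration by parts (Picone's identity) gives
\[
    \int_0^{\pi/2}\bigl[\,w\,(\dot\xi)^2+w\,V_\rho\,\xi^2\,\bigr]\,dt=\int_0^{\pi/2} w\,\phi_\rho^{\,2}\,(\dot\eta)^2\,dt\;\ge\;0,
\]
the cross term collapsing precisely because $\phi_\rho$ solves the $\lambda=0$ equation. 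Since the left-hand side is, up to a positive constant, the second variation $\delta^2 E(\psi_\rho)(V,V)$ restricted to equivariant variations, this proves that $\psi_\rho$ is equivariantly weakly stable.

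The one point requiring care is the vanishing of the boundary terms generated by the integration by parts, namely $\bigl[\,w\,\phi_\rho\,\dot\phi_\rho\,\eta^2\,\bigr]_0^{\pi/2}$. Near $t=0$ one has $\phi_\rho\sim t$ and $w\sim t^{2n-2p-1}$, so $w\,\phi_\rho\,\dot\phi_\rho\sim t^{2n-2p}$; near $t=\tfrac\pi2$, writing $s=\tfrac\pi2-t$, one has $\phi_\rho\sim s/\rho^2$ and $w\sim s^{2p+1}$, so $w\,\phi_\rho\,\dot\phi_\rho\sim s^{2p+2}$. Both exponents are positive since $0\le p<n$, and together with the behaviour of $\xi$ near the endpoints dictated by membership in the form domain of \eqref{spparticular} the boundary contributions vanish. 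I expect this endpoint analysis to be the only genuine obstacle; the remainder is the algebraic identification of $\phi_\rho$ as a Jacobi field and the routine Picone computation. Finally I would remark that the argument works verbatim for $\rho>0$ as well, since $\phi_\rho>0$ there too, recovering in particular the equivariant weak stability part of Theorem~\ref{stability1}.
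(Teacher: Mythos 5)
Your argument is correct, but it is genuinely different from the one in the paper. The paper's proof is a two--line symmetry observation: since $r_{-\rho,0}=-r_{\rho,0}$, the potential in \eqref{spparticular} depends only on $\cos 2r_{\rho}$ and $\cos 4r_{\rho}$ and is therefore invariant under $\rho\mapsto-\rho$, so the equivariant spectrum of $\psi_{\rho}$ coincides with that of $\psi_{-\rho}$; the latter is nonnegative because $\psi_{-\rho}$ (with $-\rho>0$) is holomorphic and hence weakly stable by Theorems~\ref{regularity} and~\ref{weaklystability}. That route is shorter but leans on the K\"ahler machinery behind Theorem~\ref{stability1}. Your route stays entirely at the ODE level: you correctly identify \eqref{spparticular} with $\lambda=0$ as the linearization of \eqref{ODE}, observe that $\phi_{\rho}=\partial_{\rho}r_{\rho,0}=\tfrac{1}{2}\sin 2t\,(\cos^2 t+\rho^2\sin^2 t)^{-1}$ is a positive Jacobi field, and run the ground--state substitution with weight $w=(\sin t)^{2n-2p-1}(\cos t)^{2p+1}$; your endpoint asymptotics are right, and since admissible $\xi$ vanish at least linearly at $0$ and $\tfrac{\pi}{2}$ while $\phi_{\rho}$ vanishes exactly to first order there, $\eta=\xi/\phi_{\rho}$ stays bounded and the boundary terms indeed disappear. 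What your approach buys: it treats all $\rho\neq 0$ uniformly without invoking holomorphicity, it is robust (it would survive perturbations destroying the $\rho\mapsto-\rho$ symmetry), and since $\phi_{\rho}$ itself satisfies the boundary conditions it exhibits $0$ as an attained equivariant eigenvalue --- consistent with the $j=0$ eigenfunction $\sech x = \sin 2t$ of Theorem~D, which is exactly $2\phi_{1}$. What it does not give is the full (non--equivariant) weak stability for $\rho>0$, which the paper's holomorphicity argument does provide.
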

\begin{proof}
    Since
    \begin{equation*}\small
        \begin{split}
            \cos ( 2 r_{ - \rho } ( t ) ) = \cos ( - 2 r_{ \rho } ( t ) ) = \cos ( 2 r_{ \rho } ( t ) ), \\
            \cos ( 4 r_{ - \rho } ( t ) ) = \cos ( - 4 r_{ \rho } ( t ) ) = \cos ( 4 r_{ \rho } ( t ) ),
        \end{split}
    \end{equation*}
    the equivariant spectrum of $ \psi_{ \rho } $ coincides with the equivariant spectrum of $ \psi_{ - \rho } $. By Theorem~\ref{stability1}, for $ \rho > 0 $ the map $ \psi_{ \rho } $ is weakly stable. In particular, it is equivariantly weakly stable, so the eigenvalues of the spectral problem~\eqref{spparticular} are all non-negative. 
\end{proof}

Now we proceed to explicitly compute the spectra of the maps $ \psi_1, \psi_{ - 1 } $ in the case $ p = \tfrac{ n - 1 }{ 2 } $. The reason for this specific choice is the appearance of the symmetry $ ( x, \xi ) \to ( - x, - \xi ) $ in the spectral problem, which significantly simplifies the computations. After the substitution
\begin{equation*}\small
    t ( x ) = \arctan e^x,
\end{equation*}
the problem~\eqref{spparticular} reads
\begin{equation}\label{auxeigenvalueproblem}\small
    \ddot{ \xi } ( x ) - ( n - 1 ) \tanh x \, \dot{ \xi } ( x ) - n \tanh^2 x \, \xi ( x ) + ( \tfrac{ \lambda }{ 4 } + 1 ) \tfrac{ 1 }{ \cosh^2 x } \, \xi ( x ) = 0
\end{equation}
for $ \xi \in C_0^{ \infty } ( \R ) $.

\begin{proposition}
    The spectral problem~\eqref{auxeigenvalueproblem} describing the equivariant stability of the maps $ \psi_1 $ and $ \psi_{ - 1 } $, is solved by 
    \begin{equation*}\small
        \xi_j ( x ) = \tfrac{ 1 }{ \cosh x } \, P_j^{ ( \frac{ n + 1 }{ 2 }, \frac{ n + 1 }{ 2 } ) }( \tanh x ), \quad \lambda_j = 4 j ( j + n + 2 )
    \end{equation*}
    for $ j\in \N $, where $P_j^{ ( \frac{ n + 1 }{ 2 }, \frac{ n + 1 }{ 2 } ) } $ are the so-called Jacobi polynomials.  
\end{proposition}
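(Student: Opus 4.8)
The plan is to turn \eqref{auxeigenvalueproblem} into the Jacobi differential equation \eqref{jacobipolynomial} by combining the change of variable $u=\tanh x$ with the ansatz $\xi(x)=(\cosh x)^{-1}\eta(u)$, and then to read off the eigenvalues from the Jacobi parameters. (Note that \eqref{auxeigenvalueproblem} already incorporates the hypothesis $p=\tfrac{n-1}{2}$, and that $n\in 2\N+1$ makes $\tfrac{n+1}{2}$ a positive integer, so $\alpha=\beta=\tfrac{n+1}{2}>-1$ is an admissible pair of Jacobi parameters.)

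First I would record the elementary identities $u=\tanh x$, $\tfrac{du}{dx}=1-u^2=\tfrac{1}{\cosh^{2}x}$ and $(\cosh x)^{-1}=(1-u^2)^{1/2}$, so that $\xi=(1-u^2)^{1/2}\eta(u)$ and $\tfrac{d}{dx}=(1-u^2)\tfrac{d}{du}$. Writing $\eta',\eta''$ for the derivatives with respect to $u$, a direct computation gives
\begin{equation*}
\dot\xi=-u(1-u^2)^{1/2}\eta+(1-u^2)^{3/2}\eta'
\end{equation*}
and
\begin{equation*}
\ddot\xi=(1-u^2)^{5/2}\eta''-4u(1-u^2)^{3/2}\eta'+\bigl(u^2(1-u^2)^{1/2}-(1-u^2)^{3/2}\bigr)\eta .
\end{equation*}
Substituting these, together with $-n\tanh^2 x\,\xi=-nu^2(1-u^2)^{1/2}\eta$ and $(\tfrac{\lambda}{4}+1)\tfrac{1}{\cosh^{2}x}\,\xi=(\tfrac{\lambda}{4}+1)(1-u^2)^{3/2}\eta$, into \eqref{auxeigenvalueproblem}, the coefficient of $\eta$ carried by the $u^2(1-u^2)^{1/2}$ terms is $1+(n-1)-n=0$, so these cancel, while the remaining $\eta$ terms combine to $\tfrac{\lambda}{4}(1-u^2)^{3/2}\eta$. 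Dividing through by $(1-u^2)^{3/2}$ leaves
\begin{equation*}
(1-u^2)\eta''-(n+3)u\,\eta'+\tfrac{\lambda}{4}\,\eta=0 .
\end{equation*}

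Comparing this with the Jacobi equation \eqref{jacobipolynomial} forces $\beta-\alpha=0$ and $\alpha+\beta+2=n+3$, i.e. $\alpha=\beta=\tfrac{n+1}{2}$, together with $\tfrac{\lambda}{4}=j(j+1+\alpha+\beta)=j(j+n+2)$. Hence $\eta=P_j^{(\frac{n+1}{2},\frac{n+1}{2})}(u)$ solves the reduced equation precisely for $\lambda=\lambda_j:=4j(j+n+2)$, and transporting back yields $\xi_j(x)=(\cosh x)^{-1}P_j^{(\frac{n+1}{2},\frac{n+1}{2})}(\tanh x)$. To finish I would verify admissibility: $(\cosh x)^{-1}$ decays like $2e^{-|x|}$ as $x\to\pm\infty$ while $P_j^{(\frac{n+1}{2},\frac{n+1}{2})}(\tanh x)$ stays bounded, so each $\xi_j$ is smooth and exponentially decaying, hence lies in the form domain of the self-adjoint Sturm--Liouville realization attached to \eqref{auxeigenvalueproblem} (equivalently, pulls back to an eigenfunction of the original singular problem on $(0,\tfrac{\pi}{2})$ with the correct boundary behaviour); that $\{\lambda_j\}_{j\in\N}$ is the entire equivariant spectrum then follows from the completeness of the Jacobi polynomials $\{P_j^{(\alpha,\alpha)}\}$ in $L^2\bigl((-1,1),(1-u^2)^{\alpha}\,du\bigr)$, carried over to the $x$-variable.

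The main obstacle is not conceptual but the bookkeeping in the change of variables: one must track the three half-integer powers of $1-u^2$ so that the $u^2(1-u^2)^{1/2}$ contributions cancel and the common factor $(1-u^2)^{3/2}$ can be removed cleanly. A secondary point deserving care is the identification of the correct weighted $L^2$ space, since the $\xi_j$ are not compactly supported; here the natural measure $(\cosh x)^{-(n+3)}\,dx$ on $\R$ (coming from $(1-u^2)^{1/2}\,dx$ twice the $\tfrac14\cosh^{-(n+1)}x$ weight, via the $(\cosh x)^{-1}$ prefactor in $\xi_j$) must be matched against the Jacobi orthogonality measure $(1-u^2)^{\frac{n+1}{2}}\,du$ on $(-1,1)$, which is exactly what the substitution $u=\tanh x$ provides.
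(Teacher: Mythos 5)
Your proof is correct and follows essentially the same route as the paper: the ansatz $\xi=(\cosh x)^{-1}\eta(\tanh x)$ (which the paper performs in two successive steps, first peeling off $(\cosh x)^{-1}$ and then substituting $u=\tanh x$) reduces \eqref{auxeigenvalueproblem} to $(1-u^2)\eta''-(n+3)u\,\eta'+\tfrac{\lambda}{4}\eta=0$, which is matched to \eqref{jacobipolynomial} with $\alpha=\beta=\tfrac{n+1}{2}$ and $\lambda_j=4j(j+n+2)$. Your closing remarks on decay and on completeness of the Jacobi polynomials in the weighted $L^2$ space go beyond what the paper writes down and are a welcome addition, since they are what justifies calling $\{\lambda_j\}$ the \emph{entire} equivariant spectrum.
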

\begin{proof}
With the ansatz
\begin{equation*}\small
    \xi ( x ) = \tfrac{ 1 }{ \cosh x } \, f ( x ),
\end{equation*}
equation~\eqref{auxeigenvalueproblem} can be written as
\begin{equation*}\small
    f'' ( x ) - ( n + 1 ) \tanh x \, f' ( x ) + \tfrac{ \lambda }{ 4 } f ( x ) \, \sech^2 x = 0 .
\end{equation*}
In order to reduce this equation to the form~\eqref{jacobipolynomial}, we plug
\begin{equation*}\small
    f ( x ) = u ( \tanh x ),
\end{equation*}
and thus
\begin{equation*}\small
    ( 1 - \tanh^2 x ) \, u'' ( \tanh x ) - ( n + 3 ) \tanh x \, u' ( \tanh x ) + \tfrac{ \lambda }{ 4 } u ( \tanh x ) = 0 .
\end{equation*}
Then the eigenvalue problem~\eqref{auxeigenvalueproblem} is solved by
\begin{equation*}\small
    \xi_j ( x ) = \tfrac{ 1 }{ \cosh x } \, P_j^{ ( \frac{ n + 1 }{ 2 }, \frac{ n + 1 }{ 2 } ) } ( \tanh x ), \quad \lambda_j = 4 j ( j + n + 2 )
\end{equation*}
for $ j \in \N $.
\end{proof}

\begin{remark}
\begin{enumerate}

    \item[1.] Urakawa had already derived equation~\eqref{ODE} in~\cite{urakawa1993equivariant}. He proved the existence of a solution satisfying $ 0 \leq r \leq \tfrac{\pi}{2}$ and $ k = 1 $ using some tools from the calculus of variations. Moreover, in our opinion the hypotheses given in the paper were too restrictive: the relation between $ n $ and $ p $ is non-trivial, while the solutions constructed here are independent of $ n $ and $ p $, and the unique holomorphic harmonic self-map in his setup is the identity map.
    
    \item[2.] One might ask whether there is a geometric reason for the existence of uncountable many equivariant harmonic self-maps of $ \CPn $ in every dimension for the action considered. In contrast, in the case of rotationally symmetric harmonic self-maps of spheres one needs to impose the condition $ 3 \leq n \leq 6 $. Even in this situation, a numerical analysis suggests that there are considerably fewer solutions. 
    
    On the one hand, it is clear that the K\"ahler structure of $ \CPn $ represents an advantage for the existence of harmonic maps. In this situation, every holomorphic map is harmonic, so one can try to recover holomorphic harmonic maps by the condition $ d \psi \circ J = J \circ d \psi $. In our case, this condition reduces to a first-order ordinary differential equation.

    On the other hand, it is well known that curvature plays an important role in the existence of harmonic maps between Riemannian manifolds: Eells and Sampson~\cite{eells1964harmonic} proved that if all the sectional curvatures of the target manifold are non-positive, then there exists a harmonic representative in every homotopy class. The positive curved case seems to be exceptional, see for example~\cite{bizon1997harmonic} or~\cite{smith1975harmonic}. Recently, Branding and Siffert~\cite{branding2022infinite} constructed infinitely many harmonic maps between ellipsoids in all dimensions by making the correct deformation of these manifolds depending on the dimension. 
    
    One can argue that the fact that the sphere is a $ 1 $-pinched manifold is too restrictive for the existence of rotationally symmetric solutions in large dimensions, and this is why when one deforms an ellipsoid in such a way that the pinching constant is small enough, new solutions appear. This could be another argument for the $ \CPn $ case, since the sectional curvature in the complex projective space varies between $ 1 $ and $ 4 $ (i.e., it is a $ \tfrac{ 1 }{ 4 } $-pinched manifold).

    The relation between the pinching constant and harmonic maps is not new. Let $ M $ be a Riemannian manifold of dimension $ m \geq 3 $: it has been proved in~\cite{yanglian1989non} that if $ M $ has a pinching constant $ \delta $ big enough, then a harmonic map cannot be weakly stable unless it is constant. Actually, $ \delta $ depends on the dimension $ \delta \equiv \delta ( n ) $, and it was suggested in~\cite{howard1985nonexistence} that $ \delta ( n ) \to 1 $ as $ n \to \infty $.

\end{enumerate}
\end{remark}

\bibliographystyle{plain}
\bibliography{Bibliography.bib}

\end{document}